\newcommand{\Mack}{\categ{Mack}}
\DeclareMathOperator{\Lin}{Lin}
\title[Stability with respect to an orbital $\infty$-category]{Parametrized higher category theory and higher algebra: Exposé IV -- Stability with respect to an orbital $\infty$-category}
\author{Denis Nardin}
\address{Department of Mathematics, Massachusetts Institute of Technology, 77 Massachusetts Avenue, Cambridge, MA 02139-4307, USA}
\email{nardin@math.mit.edu}
\begin{document}

\begin{abstract}
In this paper we develop a theory of stability for $G$-categories (presheaf of categories on the orbit category of $G$), where $G$ is a finite group. We give a description of Mackey functors as $G$-commutative monoids exploit it to characterize $G$-spectra as the $G$-stabilization of $G$-spaces. As an application of this we provide an alternative proof of a theorem by Guillou and May. The theory here is developed in the more general setting of orbital categories.
\end{abstract}

\maketitle

\tableofcontents

\section{Introduction}

It is often said that spectra are the same as homology theories. This is strictly speaking wrong when homology theories are interpreted as valued in graded abelian groups, due to the presence of phantom maps. Luckily, Goodwillie calculus 
provides us with an equivalence between spectra and linear functors from finite pointed spaces to spaces (that is space-valued homology theories). This allows us to state a universal property for the category of spectra: it is the universal source of a linear functor to spaces (\cite[Pr.~1.4.2.22]{HA}).

One would imagine that a similar statement should be true for $G$-spectra, where $G$ is a finite group. The category of $G$-spectra is not, however, the universal source of linear functors to $G$-spaces (that would be spectral presheaves over the orbit category of $G$). It has been an important insight in the solution to the Kervaire invariant one problem by Hill, Hopkins, and Ravenel (\cite{HHRarxiv}) that in $G$-spectra one should ask for a stronger form of additivity: they should not only turn coproducts into products, but also coproducts indexed by a finite $G$-set into the corresponding product. This is merely a form of Atiyah duality for finite $G$-sets, but a highly suggestive one.

In order to speak of indexed products and coproducts it is necessary to be able to remember the notion of objects with an $H$-action for every subgroup $H$ of $G$. So we need to move from the notion of $\infty$-category to the notion of $G$-$\infty$-category, which is a presheaf of categories over $\OO_G$, the orbit category of $G$. This sends $G/H$ to the $\infty$-category of objects corresponding to the subgroup $H$ (e.g. $H$-spaces, $H$-spectra etc.) and encodes all the functoriality of restriction to subgroups (corresponding to the map $G/H\to G/K$ for $H\subseteq K$) and of twisting the action by conjugation (corresponding to the isomorphism of $G/H$ with $G/gHg^{-1}$ in $\OO_G$). The general theory of (co)limits indexed by a $G$-$\infty$-category has been developed in \cite{Exp2}. We will briefly summarize the necessary results in section \ref{sec:preliminaries}.

Once the notion of $G$-(co)limit has been set up, one can try to mimic the whole theory of additive and stable $\infty$-categories in this equivariant setting. This works nicely and provides us with a universal property for the $G$-$\infty$-category of $G$-spectra: it is the universal recipient of a $G$-linear functor from the $G$-$\infty$-category of finite $G$-spaces (cf. theorem \ref{thm:universal-property-B-spectra}).
\begin{thm}
    For any $G$-category with finite $G$-colimits $C$ the $G$-functor $\Omega^\infty:\Sp^G\to \Top^G$ induces an equivalence
    \[\Fun^{G-\mathrm{rex}}_G(C,\underline{\Sp}^G)\to \Lin_G(C,\underline{\Top}_G)\]
    between the category of $G$-functors $C\to \underline{\Sp}^G$ preserving finite $G$-colimits and the category of $G$-linear $G$-functors $C\to \underline{\Top}_G$.
\end{thm}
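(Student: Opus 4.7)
The plan is to deduce this from the universal property of $G$-stabilization, paralleling the non-equivariant argument of \cite[Pr.~1.4.2.22]{HA}. The key preliminary input, which I would establish earlier in the paper, is that $\underline{\Sp}^G$ is $G$-stable; consequently a $G$-functor $F: C \to \underline{\Sp}^G$ preserves finite $G$-colimits if and only if it is $G$-linear, since $G$-pushouts agree with $G$-pullbacks in a $G$-stable target and reducedness comes for free. Because $\Omega^\infty$ is itself $G$-linear (it is a $G$-right adjoint that preserves $G$-limits and sends the zero object to a point), postcomposition with it gives a well-defined map $\Fun^{G-\mathrm{rex}}_G(C, \underline{\Sp}^G) \to \Lin_G(C, \underline{\Top}_G)$; the content of the theorem is to exhibit an inverse.

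For the inverse, I would use the description of $\underline{\Sp}^G$ as the $G$-stabilization of the pointed $G$-category $\underline{\Top}_{G,*}$: concretely, $\underline{\Sp}^G$ arises as the limit of the tower obtained by iterating the indexed loop functors $\Omega^V$, where $V$ ranges over the relevant representation spheres, or equivalently over the indexed loops associated to maps in the orbit category. Given a $G$-linear $F: C \to \underline{\Top}_G$, $G$-linearity supplies natural equivalences $F \simeq \Omega^V \circ F \circ \Sigma^V$ (using that $C$ has finite $G$-colimits, hence supports the requisite $\Sigma^V$), and these equivalences should assemble coherently into a factorization $\tilde F: C \to \underline{\Sp}^G$ satisfying $\Omega^\infty \circ \tilde F \simeq F$ by construction.

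The main technical obstacle is making this assembly and its functoriality precise at the $\infty$-categorical level, together with verifying that $\tilde F$ preserves all finite $G$-colimits rather than merely commuting with indexed loops. The cleanest way to organize this, in my view, is to present both sides of the asserted equivalence as limits of the same tower: $\Lin_G(C, \underline{\Top}_G)$ sits inside $\Fun_G(C, \underline{\Top}_{G,*})$ as the limit of the tower of indexed-$\Omega$ pullbacks that encode $G$-linearity, while $\Fun^{G-\mathrm{rex}}_G(C, \underline{\Sp}^G)$ is the analogous limit arising from the stabilization description of $\underline{\Sp}^G$. Once both sides are written as limits of equivalent diagrams, the comparison reduces to commuting a limit past the functor category, and the unit/counit equivalences follow from the defining properties of the $G$-stabilization. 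The orbital-specific subtlety, which is the part I would expect to take most care, is that the tower must use indexed loops along arbitrary finite transitive $G$-sets rather than only ordinary loops, and one needs to check that the paper's refined notion of $G$-linearity captures exactly this indexed structure.
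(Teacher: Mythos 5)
Your first step is sound: since $\underline{\Sp}^G$ is $G$-stable, a $G$-functor $C\to\underline{\Sp}^G$ preserves finite $G$-colimits if and only if it is $G$-linear (fiberwise, pushouts and pullbacks agree in a stable fiber; for the indexed part, finite $G$-coproducts and $G$-products agree in a $G$-semiadditive target), and $\Omega^\infty$ preserves finite $G$-limits, so postcomposition is well defined. The gap is in your construction of the inverse. It rests on a presentation of $\underline{\Sp}^G$ as the limit of a tower of indexed loop functors $\Omega^{S^U}$ on pointed $G$-spaces, but no such presentation is available here, and it is not a formality: in this paper $\underline{\Sp}^G(D)$ is \emph{defined} as $\underline{\Sp}_T(\underline{\CMon}_T(D))$, fiberwise spectra inside $G$-commutative monoids, and the assertion that this agrees with the category obtained by inverting the indexed suspensions $\Sigma^{S^U}$ is essentially the Wirthm\"uller isomorphism (equivalently, that inverting indexed spheres automatically produces a $G$-semiadditive category). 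The paper never proves this abstractly -- in the appendix the Wirthm\"uller isomorphism is \emph{quoted} from the orthogonal-spectra literature to verify that $\underline{\Sp}^G_{orth}$ is $G$-stable -- so your plan assumes precisely the hardest content. A second, related gap: your claim that $G$-linearity of $F\colon C\to\underline{\Top}_G$ supplies equivalences $F\simeq\Omega^{S^U}F\Sigma^{S^U}$ needs an argument. Applying fiberwise linearity and $G$-semiadditivity to the defining pushout of $\Sigma^{S^U}X$ yields only a pullback square exhibiting $\prod_U\delta_U F(X)$ as the fiber of $F(X)\to F(\Sigma^{S^U}X)$; since the target is not stable, extracting the asserted equivalence, and then making it coherent in $U$ and in $X$, is exactly the ``assembly'' step you defer, and it is where the proof would actually live. (Note also that in the general orbital setting there are no representation spheres at all, only one-point compactifications $S^U$ of finite $T$-sets, so the tower itself must first be organized over $\underline{\FF}_T$.)

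For contrast, the paper's proof avoids towers entirely by factoring the two halves of $G$-linearity through the two halves of the definition of $\underline{\Sp}^G$. Since finite $G$-colimits are generated by finite $G$-coproducts and finite fiberwise colimits (proposition \ref{prp:B-colims}), one first applies the universal property of fiberwise stabilization (proposition \ref{prp:fiberwise-spectra}) to trade functors into $\underline{\Sp}_T(\underline{\CMon}_T(D))$ preserving finite $G$-colimits for fiberwise linear, $G$-coproduct-preserving functors into $\underline{\CMon}_T(D)$, and then the universal property of $G$-commutative monoids (corollary \ref{cor:universal-property-monoids}) to trade those for $G$-linear functors into $D$. The indexed structure is thus handled once and for all by the Mackey-style semiadditivization $\underline{\CMon}_T$, not by indexed loops, and no coherence problem ever arises. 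If you want to salvage your route, you would first have to prove the tower description of $\underline{\Sp}^G$ as an independent theorem; as written, your argument is circular in the sense that the comparison of the two limit towers presupposes the equivalence being proved.
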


Another important result in the same spirit is the identification of connective spectra with group-like commutative monoids in spaces, as done in \cite{MR50:5782}. This too has an equivariant analogue (cf. corollary~\ref{cor:recognition-principle}). In fact it turns out that $G$-commutative monoids are the same thing as product-preserving functors from the effective Burnside category of \cite{M1}. This explains the ubiquity of Mackey functors in equivariant homotopy theory and allows us to give an alternative proof of \cite[Th.~0.1]{guillou2}, identifying orthogonal $G$-spectra with spectral Mackey functors (see appendix \ref{sec:orthogonal}).

Two important predecessors of this paper are \cite{MR2286026} and \cite{dotto-moi-excision}. In the first a description of $G$-spectra as enriched functors from $G$-spaces to $G$-spaces is provided for a general compact Lie group, while the second contains a characterization of $G$-spectra as functors in term of an excisivity condition for a finite group $G$. While the approach taken here is different, the intuition behind it is very similar.

In this paper we will work in the general setting of atomic orbital categories (see section \ref{sec:preliminaries}). Examples of atomic orbital categories beyond the orbit category of a (pro)finite group are an $\infty$-groupoid (thus recovering the theory of \cite{MR2271789}), the cyclonic orbit category (\cite[Df.~1.10]{cyclonic}) and  the global orbit category for finite groups (the full subcategory of $\OO_{gl}$ defined in \cite[Cn.~8.32]{schwedeglobal} spanned by completely universal finite subgroups of $\mathcal{L}$). One important non-example is the orbit $\infty$-category of a compact Lie group. This is due to the lack of a good notion of finite $G$-set stable under restriction to subgroups when $G$ is compact Lie. The reader uninterested in such generality can safely substitute $\OO_G$ every time $T$ appears in this paper.

{\bf Acknowledgments:} This paper is part of a joint project with Clark Barwick, Emanuele Dotto, Saul Glasman and Jay Shah. Many of the ideas and details of the present paper arose first during conversations with them. Other papers in this project are \cite{Exp0}, \cite{Exp1}, \cite{Exp2}, \cite{Exp3}, \cite{Exp5}, \cite{Exp6}, \cite{Exp7}, \cite{Exp8}, and \cite{Exp9}. I would like to thank Mark Behrens for help navigating the models for $G$-spectra in appendix~\ref{sec:orthogonal}. We also want to thank all the other past and present participants to the Bourbon seminar for the incredibly stimulating environment: Lukas Brantner, Peter Haine, Marc Hoyois, Akhil Mathew and Tomer Schlank.


\section{Preliminaries on equivariant (co)limits}\label{sec:preliminaries}

\begin{nul}
    We will be using extensively the theory of $T$-$\infty$-categories for a general base category $T$, developed in \cite{Exp1} and \cite{Exp2}. In this section we will recall the most important results.
    
    Motivated by the discussion of $G$-categories in the introduction, we want to study presheaves of $\infty$-categories over $T$. However, a different model for those turns out to be more convenient (e.g. allowing us to state results like theorem~\ref{thm:univpropDG}). To describe it we will make use of the following foundational result of the theory of $\infty$-categories (cfr. \cite[Th.~3.2.0.1]{HTT} and \cite[Sec.~3.3.2]{HTT}):
\end{nul}
\begin{thm}
    There is a cocartesian fibration $\mathcal{Z}\to \Cat_\infty$ such that for every $\infty$-category $S$ there is an equivalence between $\Fun(S,\Cat_\infty)$ and the $\infty$-category of cocartesian fibrations over $S$, sending $F:S\to \Cat_\infty$ to the pullback of $\mathcal{Z}\to \Cat_\infty$ along $F$.
\end{thm}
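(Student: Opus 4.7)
The plan is to realize this as the straightening–unstraightening theorem from HTT Chapter 3. The first task is to produce the candidate universal cocartesian fibration $\mathcal{Z} \to \Cat_\infty$. I would take $\mathcal{Z}$ to be the $\infty$-category of \emph{pointed} small $\infty$-categories: objects are pairs $(\mathcal{C}, x)$ with $\mathcal{C} \in \Cat_\infty$ and $x \in \mathcal{C}$, and a morphism $(\mathcal{C}, x) \to (\mathcal{D}, y)$ is a functor $f : \mathcal{C} \to \mathcal{D}$ equipped with a morphism $\alpha : f(x) \to y$ in $\mathcal{D}$. The projection $p$ forgetting the basepoint should then be a cocartesian fibration: a cocartesian lift of $f$ at $(\mathcal{C}, x)$ is given by the pair $(f, \mathrm{id}_{f(x)}) : (\mathcal{C}, x) \to (\mathcal{D}, f(x))$, and the fiber over $\mathcal{C}$ is equivalent to $\mathcal{C}$ itself.

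Next I would construct the two halves of the claimed equivalence. In one direction, pullback of $p$ along $F : S \to \Cat_\infty$ yields a cocartesian fibration $F^*\mathcal{Z} \to S$, since cocartesian fibrations are stable under base change and cocartesian edges restrict appropriately. In the other direction, from a cocartesian fibration $X \to S$ one extracts a candidate functor $S \to \Cat_\infty$ by assigning to each $s \in S$ the fiber $X_s$ and to each edge $s \to s'$ the cocartesian pushforward functor $X_s \to X_{s'}$. The nontrivial point is that these assignments assemble into a genuine map of $\infty$-categories rather than merely a map on homotopy categories; making this precise is essentially the content of the straightening construction.

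To show these assignments are mutually inverse, I would pass to the marked simplicial model and set up the Quillen adjunction $(\mathrm{St}_S, \mathrm{Un}_S)$ between the cocartesian model structure on $\mathrm{sSet}^+_{/S}$ and the projective model structure on simplicial functors $\mathfrak{C}[S] \to \mathrm{sSet}^+$. The unstraightening can be written via an explicit coend formula, from which checking that it is right Quillen is routine. The main obstacle — the deepest step in the whole argument — is proving that this Quillen adjunction is in fact a Quillen \emph{equivalence}. The strategy is to reduce, by a skeletal induction on $S$ using that both sides turn homotopy colimits in the $S$ variable into the appropriate homotopy limits, to the case of a simplex $S = \Delta^n$, and then further to $S = \Delta^0$, where the claim becomes essentially tautological.

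Finally, once the Quillen equivalence is in hand, identifying $\mathrm{Un}_S(F)$ with $F^*\mathcal{Z}$ is a Yoneda-style argument: both are cocartesian fibrations over $S$ whose fiberwise data realizes the prescribed functor $F$, and the equivalence already established forces any two such to be canonically equivalent. This is where the specific choice of $\mathcal{Z}$ (the pointed objects fibration) gets pinned down as \emph{the} universal cocartesian fibration classifying the identity of $\Cat_\infty$.
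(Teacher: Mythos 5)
Your proposal is correct and takes the same route as the paper's treatment: the paper does not prove this statement at all but simply recalls it as foundational, citing \cite[Th.~3.2.0.1]{HTT} and \cite[Sec.~3.3.2]{HTT}, and your sketch is exactly the straightening--unstraightening argument of that reference, including the reduction of the Quillen equivalence to simplices and then to a point. Two minor caveats that do not change the verdict: HTT defines $\mathcal{Z}$ as the unstraightening of the identity functor of $\Cat_\infty$, with your lax-slice model (pairs $(\mathcal{C},x)$ and morphisms $(f,\alpha\colon f(x)\to y)$) being a separately verified equivalent description, and the base case $S=\Delta^0$ is not ``essentially tautological'' --- it is where the hard combinatorial comparison with the simplicial-category model actually takes place.
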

\begin{dfn}
    Motivated by the previous result, we let a \emph{$T$-$\infty$-category} to be a cocartesian fibration over $T^\op$. A \emph{$T$-functor} between two $T$-$\infty$-categories is simply a map of cocartesian fibrations (that is a map of simplicial sets over $T^\op$ that sends cocartesian arrows to cocartesian arrows). Using the simplicial nerve of \cite[Df.~1.1.5.5]{HTT} we can form the $\infty$-category of $T$-$\infty$-categories.
\end{dfn}
\begin{ntn}
    If $C$ is a $T$-$\infty$-category and $e:t\to t'$ is an edge of $T$, we denote the pushforward functor $C_{t'}\to C_t$ by $\delta_e$ or $\delta_{t/t'}$.
\end{ntn}
\begin{dfn}\label{def:finite-sets}
    For any $\infty$-category $T$, the $\infty$-category $\FF_T$ of \emph{finite $T$-sets} is the full subcategory of the category of presheaves on $T$ spanned by finite coproducts of representables. It satisfies the following universal property: for any $\infty$-category $D$ with all finite coproducts the forgetful functor
    \[\Fun^\amalg(\FF_T,D)\to \Fun(T,D)\]
    is an equivalence, where the left hand side is the category of functors preserving finite coproducts. There is a functor $\mathrm{Orbit}:\FF_T\to\FF$ (where $\FF=\FF_{\Delta^0}$ is the category of finite sets) sending every finite $T$-set to the set of summands.
\end{dfn}

For any finite $T$-set $U$ there is a $T$-$\infty$-category $\UU$, called the \emph{category of points} of $U$, which as a simplicial set over $T^\op$ is defined by
\[\UU=T^\op\times_{\FF^\op_T}\left(\FF^\op_T\right)_{U/}\,.\]
This is the left fibration classified by the functor sending $V$ to the space of arrows $[V\to U]$.

\begin{cnstr}
If $C,D$ are two $T$-$\infty$-categories, there exists a $T$-$\infty$-category $\underline{\Fun}_T(C,D)$ classified by the functor
\[b\mapsto \Fun_{T_{/b}}(C|_{T_{b/}}, D|_{T_{b/}})\]
There is an obvious evaluation $T$-functor
\[C\times_{T^\op} \underline{\Fun}_T(C,D)\to D\]
\end{cnstr}

\begin{dfn}
    For every $\infty$-category $C$ we want to construct a $T$-$\infty$-category $\underline{C}_T$ classified by the functor $b\mapsto \Fun(T^\op_{/b},C)$. This is the \emph{$T$-$\infty$-category of $T$-objects in $C$}. As a simplicial set over $T^\op$ it is given by
    \[\Mor_{/T^\op}(K,\underline{C}_T) = \Mor(K\times_{T^\op}\Fun(\Delta^1,T^\op),C)\]
    where $\Fun(\Delta^1,T^\op)$ lies above $T^\op$ with the evaluation at 0. This is a cocartesian fibration thanks to \cite[Co.~3.2.2.13]{HTT}.
    
    When $T=\OO_G$ and $C=\Top$, this is the cocartesian fibration classified by the functor sending $G/H$ to the $\infty$-category of genuine $H$-spaces (that is presheaves of spaces over $\OO_H$). One of the pleasant features of the model of $T$-$\infty$-categories we are using is that the $T$-$\infty$-category of $T$-objects has a simple universal property:
\end{dfn}

\begin{thm}\label{thm:univpropDG}
Suppose $T$ an $\infty$-category, $C$ a $T$-$\infty$-category, and $D$ an $\infty$-category. Then there is a natural equivalence
\[\Fun_{T}(C,\underline{D}_T)\simeq\Fun(C,D)\,.\]
In particular, by \cite[Th.~3.2.0.1]{HTT}, the $\infty$-category $\Fun_T(C,\underline{\Cat_\infty}_T)$ is equivalent to the $\infty$-category associated to the simplicial category of cocartesian fibrations over $C$ and under this equivalence left fibrations correspond to functors whose image lies in $\underline{\Top}_T$.
\end{thm}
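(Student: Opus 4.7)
The plan is to recognize the theorem as asserting that $D\mapsto\underline{D}_T$ is right adjoint to the ``underlying $\infty$-category'' functor from cocartesian fibrations over $T^\op$ to $\Cat_\infty$. I would verify this directly from the defining formula
\[\Mor_{/T^\op}(K,\underline{D}_T)=\Mor\bigl(K\times_{T^\op}\Fun(\Delta^1,T^\op),D\bigr);\]
taking $K=C$, the task reduces to identifying which simplicial maps $C\times_{T^\op}\Fun(\Delta^1,T^\op)\to D$ correspond to $T$-functors $C\to\underline{D}_T$, and matching those with $\Fun(C,D)$.

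The heart of the argument is a pair of natural maps: an inclusion $\iota\colon C\hookrightarrow C\times_{T^\op}\Fun(\Delta^1,T^\op)$ induced by the identity section $T^\op\to\Fun(\Delta^1,T^\op)$, and a projection $\rho\colon C\times_{T^\op}\Fun(\Delta^1,T^\op)\to C$ obtained via cocartesian pushforward in $C$ (choose a cocartesian lift of the tautological edge in $\Fun(\Delta^1,T^\op)$ and take its terminal vertex). The existence of $\rho$ relies on $C\to T^\op$ being a cocartesian fibration, and $\rho\circ\iota=\mathrm{id}_C$ because identity edges are cocartesian. The candidate equivalence sends $G\colon C\to D$ to the $T$-functor classified by $G\circ\rho$, with inverse given by restriction along $\iota$. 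One composite is the identity by construction; for the other, the point is that a map over $T^\op$ is a $T$-functor exactly when the cocartesian edges of $\underline{D}_T$ are preserved, and inspection of the classifying formula shows this forces the corresponding $\tilde F$ to satisfy $\tilde F\simeq \tilde F\circ\iota\circ\rho$.

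To upgrade the bijection to an equivalence of $\infty$-categories I would run the same argument over each slice $T^\op_{/b}$ to obtain the statement for $\underline{\Fun}_T$, using compatibility of $\iota$ and $\rho$ with basechange. The last assertion is then immediate: specializing to $D=\Cat_\infty$ gives $\Fun_T(C,\underline{\Cat_\infty}_T)\simeq\Fun(C,\Cat_\infty)$, which by \cite[Th.~3.2.0.1]{HTT} is the $\infty$-category of cocartesian fibrations over $C$, with left fibrations corresponding to functors landing in $\Top\subset\Cat_\infty$ and hence factoring through $\underline{\Top}_T$. The main obstacle is making the map $\rho$ precise as a map of simplicial sets and establishing the homotopy $\tilde F\simeq\tilde F\circ\iota\circ\rho$ with all higher coherences; I expect to handle this within the marked simplicial set model of \cite[Ch.~3]{HTT}, where $\rho$ appears as a trivial cofibration in the cocartesian model structure, or alternatively by straightening both sides and recognizing the equivalence as a coend computation identifying $C$ with $\operatorname{colim}_{b}\mathcal{C}(b)\times T^\op_{/b}$, for $\mathcal{C}$ the straightening of $C$.
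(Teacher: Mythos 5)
The paper itself gives no proof of this theorem: it is recalled from the foundational papers \cite{Exp1} and \cite{Exp2}, and the argument there is essentially the one you propose, so your overall route is the standard and correct one. Note first that one of your worries dissolves immediately: applying the defining formula to $K=C\times\Delta^n$ gives an isomorphism of simplicial sets $\Fun_{/T^\op}(C,\underline{D}_T)\cong\Fun(C\times_{T^\op}\Fun(\Delta^1,T^\op),D)$, not merely a bijection on objects, so there is no separate ``upgrade to an equivalence'' step; the slice argument you sketch is needed only for the parametrized refinement $\underline{\Fun}_T(C,\underline{D}_T)\simeq\underline{\Fun}(C,D)$, which is more than the statement asserts. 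The real content is then exactly what you identify: by \cite[Cor.~3.2.2.13]{HTT} a map $C\to\underline{D}_T$ over $T^\op$ preserves cocartesian edges if and only if the classifying functor $\tilde F$ inverts the class $\mathcal{M}$ of edges $(e,\sigma)$ in which $e$ is cocartesian in $C$ and $\sigma$ is an $\mathrm{ev}_0$-cartesian edge of $\Fun(\Delta^1,T^\op)$ (a square whose target component is an equivalence); you should make this class explicit, since the theorem is precisely the assertion that your $\rho$ exhibits $C$ as the localization of $C\times_{T^\op}\Fun(\Delta^1,T^\op)$ at $\mathcal{M}$.

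Two corrections to how you propose to close the deferred coherence gap. First, $\rho$ cannot be a trivial cofibration in any model structure: it is a retraction, hence not a monomorphism. The correct statement is that $\iota$ (with the marking $\mathcal{M}$ on the target and only equivalences marked on $C$) is the marked weak equivalence, while $\rho$ itself is produced as an honest map of simplicial sets by a marked-anodyne lifting argument against the fibrant object $\rightnat{C}$ over $(T^\op)^\sharp$ --- this construction also lets you arrange $\rho\circ\iota=\mathrm{id}$ strictly. Second, the homotopy $\tilde F\simeq\tilde F\circ\iota\circ\rho$ with all coherences does not need model-categorical gymnastics once you observe that there is a natural transformation $\mathrm{id}\Rightarrow\iota\circ\rho$ whose component at $(c,[u\colon s\to t])$ is the pair consisting of a cocartesian lift of $u$ at $c$ and the morphism $[u\colon s\to t]\to[\mathrm{id}_t]$ in the arrow category; each such component lies in $\mathcal{M}$, and one checks directly that $\rho$ sends $\mathcal{M}$ to equivalences. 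Together with $\rho\circ\iota=\mathrm{id}$, this verifies the standard retraction criterion for localizations, which yields the equivalence on functor categories in one stroke. With these repairs your argument, including the straightening/lax-colimit alternative (where $\underline{D}_T$ is the cofree $T$-category on $D$ and the total category is the lax colimit of the straightening of $C$) and the final specialization to $D=\Cat_\infty$ and $\underline{\Top}_T$, is sound and agrees with the proof in the cited sources.
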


\begin{dfn}
A $T$-adjunction between two $T$-$\infty$-categories $C$ and $D$ is an adjunction $F G: C\leftrightarrows D$ between the two total categories such that $F$ and $G$ are $T$-functors (that is they send cocartesian arrows to cocartesian arrows) and unit and counit lie above the identity natural transformation of the identity functor on $T$. This is the same thing as a relative adjunction in the sense of \cite[Sec.~7.3.2]{HA} such that both functors are $T$-functors. Note that the left adjoint in a relative adjunction is is automatically a $T$-functor, but this is not true for the right adjoint.
\end{dfn}

\begin{dfn}\label{dfn:T-colims}
Precomposition with the structure map $C\to T^\op$ induces a diagonal $T$-functor
\[\Delta:D\cong \underline{\Fun}_T(T,D)\to \underline{\Fun}_T(C,D)\,.\]
When this $T$-functor has a left $T$-adjoint we say that $D$ has all $C$-indexed $T$-colimits. Similarly, if it has a right $T$-adjoint we say that $D$ has all $C$-indexed $T$-limits. If a $T$-$\infty$-category $D$ has all $C$-indexed $T$-colimits (respectively $T$-limits) for every small $T$-category $C$ we say that $D$ is $T$-cocomplete (respectively $T$-complete).

A $T$-colimit indexed by a $T$-category of the form $\mathrm{pr}_2:K\times T^\op\to T^\op$ for $K$ an $\infty$-category is called a fiberwise $T$-colimit. A $T$-colimit indexed by the category of points of a finite $T$-set is called a finite $T$-coproduct. A $T$-$\infty$-category is said to be \emph{pointed} if it has both a $T$-initial and a $T$-terminal object (that are cocartesian sections of the structure map that fiberwise select the initial and the terminal object respectively) and the canonical comparison map is an equivalence.
\end{dfn}

The following proposition summarizes the results on $T$-(co)limits from \cite{Exp2} that will be needed in this paper.
\begin{prp}\label{prp:B-colims}
    Let $C$ be a $T$-$\infty$-category.
    \begin{itemize}
        \item $C$ has all $T$-colimits indexed by $K\times T^\op$ if and only if for every $b\in T$ the fiber $C_b$ has all colimits indexed by $K$ and for every edge $e:b\to b'$ in $T$ the pushforward functor $\delta_e:C_{b'}\to C_b$ preserves colimits indexed by $K$.
        \item Suppose $\FF_T$ has all fiber products (that is $T$ is orbital, cf. Df.~\ref{dfn:orbital-category}). Then $C$ has all (finite) $T$-coproducts if and only if the following two conditions are satisfied
        \begin{enumerate}
            \item for every $b\in T$ the fiber $C_b$ has all (finite) coproducts and for every edge $e:b\to b'$ the pushforward $\delta_e$ preserves (finite) coproducts;
            \item For every edge $e:b\to b'$ the pushforward $\delta_e$ has a left adjoint $\coprod_e$ satisfying the Beck-Chevalley condition: for every pair of edges $e:b\to b'$ and $e':b''\to b'$ the canonical base change natural transformation of functors from $C_{b''}$ to $C_b$
            \[\delta_e \coprod_{e'}\to \coprod_{o\in \mathrm{Orbit}(b\times_{b'}b'')} \coprod_{pr_1}\delta_{pr_2}\]
            is an equivalence, where $pr_1:o\to b$ and $pr_2:o\to b'$ are the restrictions to $o$ of the two projections from $b\times_{b'}b''$.
        \end{enumerate}
        \item $C$ has all $T$-colimits if and only if it has all fiberwise colimits and all finite $T$-coproducts.
    \end{itemize}
    Similar statements hold for $T$-limits. When $T$-products exist the right adjoint of $\delta_e$ will be denoted by $\prod_e$.
\end{prp}

\begin{dfn}
There is also a notion of $T$-Kan extension, defined exactly as for the $T$-(co)limit: if we have a $T$-functor $j:I\to J$ there is a $T$-functor induced by precomposition with $j$:
\[j^*:\underline{\Fun}_T(J,D)\to \underline{\Fun}_T(I,D)\,.\]
If $j^*$ has a left $T$-adjoint we denote it by $j_!$ and call it the \emph{left $T$-Kan extension} along $j$. Similarly, when $j^*$ has a right $T$-adjoint we call it the \emph{right $T$-Kan extension} $j_*$.
\end{dfn}

The proof of the following proposition can be found in \cite{Exp2}.
\begin{prp}
    Let $C$ be a $T$-$\infty$-category with all $T$-colimits. Then  for every map of small $T$-$\infty$-categories $j:I\to I'$ the left Kan extension along $j$ 
    \[j_!:\underline{\Fun}_T(I,C)\to \underline{\Fun}_T(I',C)\]
    exists. Similarly for $T$-limits and the right Kan extension $j_*$.
\end{prp}

\begin{ntn}
    A $T$-functor is said to be \emph{fiberwise left exact}, \emph{$T$-left exact}, \emph{fiberwise right exact}, \emph{$T$-right exact} if it preserves finite fiberwise limits, finite $T$-limits, finite fiberwise colimits and finite $T$-colimits respectively.
    We will denote the full $T$-$\infty$-subcategories of $\underline{\Fun}_T(C,D)$ preserving certain (co)limits will be denoted as in the following list:

    \vskip 3pt
    \setlength{\tabcolsep}{10pt}
    \begin{tabular}{ l  l }
        \labelitemi \,$\underline{\Fun}_T^{T-\mathrm{lex}}(C,D)$:  & finite $T$-colimits;       \\[1mm]
        \labelitemi \,$\underline{\Fun}_T^{fb-\mathrm{lex}}(C,D)$: & finite fiberwise colimits; \\[1mm]
        \labelitemi \,$\underline{\Fun}_T^\amalg(C,D)$:            & finite $T$-coproducts;     \\[1mm]
        \labelitemi \,$\underline{\Fun}_T^{T-\mathrm{rex}}(C,D)$:  & finite $T$-limits;         \\[1mm]
        \labelitemi \,$\underline{\Fun}_T^{fb-\mathrm{rex}}(C,D)$: & finite fiberwise limits;   \\[1mm]
        \labelitemi \,$\underline{\Fun}_T^\times(C,D)$:            & finite $T$-products.
    \end{tabular}

\end{ntn}


\section{Fiberwise stability}\label{sec:relative-stability}
    
    \begin{rec}
        If $C$ is an $\infty$-category with finite colimits and $D$ is an $\infty$-category with finite limits a functor $F:\fromto{C}{D}$ is called \emph{linear} if it sends the initial object of $C$ to the terminal object of $D$ and pushout squares in $C$ to pullback squares in $D$ (this functors are called \emph{pointed excisive} in \cite{HA}). In \cite[Pr.~1.4.2.13]{HA} it is proven that a pointed functor is linear if and only if the natural transformation $F\to \Omega F\Sigma$ is an equivalence. The full subcategory of $\Fun(C,D)$ spanned by linear functors is denoted $\Lin(C,D)$.
    \end{rec}
    
    \begin{dfn}
    Let $C,D$ $T$-$\infty$-categories and assume that $C$ has all finite fiberwise colimits and $D$ has all finite fiberwise limits. We say that a $T$-functor $F:\fromto{C}{D}$ is \emph{fiberwise linear} if the restriction on the fiber $F_b:\fromto{C_b}{D_b}$ is linear for every $b\in T$. We denote the full $T$-subcategory of $\underline{\Fun}_T(C,D)$ spanned by fiberwise linear functors with $\underline{\Lin}_T(C,D)$.
    \end{dfn}
    
    \begin{nul}
        First we want to show that, if $C$ is $T$-pointed, $\Lin_T(C,D)$ is a localization of the subcategory $\Fun_{T,*}(C,D)$ of functors sending the zero object to the terminal object in each fiber. To do so we introduce two additional functors $\Sigma_T:\fromto{C}{C}$ and $\Omega_T:\fromto{D}{D}$ which are the pushout (respectively pullback) of the diagrams
        \begin{equation*}
            \begin{tikzcd}
                id_C \ar{r}\ar{d} & * \\
                *\\
            \end{tikzcd}
            \textrm{ and }
            \begin{tikzcd}
                & *\ar{d}\\
                 * \ar{r}& id_D\\
            \end{tikzcd}\,.
        \end{equation*}
        Since fiberwise linearity can be checked fiberwise it is clear that a functor $F\in \Fun_{T,*}(C,D)$ is in $\Lin(C,D)$ if and only if the canonical map $\fromto{F}{\Omega_TF\Sigma_T}$ is an equivalence.
    \end{nul}
    
    \begin{lem}
        Suppose that $C$ is a pointed $T$-category. Then the $\infty$-category $\Lin_T(C,D)$ is stable.
    \end{lem}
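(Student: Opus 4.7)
The plan is to verify the standard criterion (see \cite[Cor.~1.4.2.17]{HA}) that a pointed $\infty$-category with finite limits is stable if and only if its loop endofunctor is an equivalence. So I need to check three properties of $\Lin_T(C,D)$: that it is pointed, that it admits finite limits, and that the loop functor $\Omega$ is invertible.

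The first two properties are inherited from the ambient category. Limits in $\Fun_T(C,D)$ are computed pointwise (using that $D$ has fiberwise finite limits and the universal property of the functor $T$-category), and the constant $T$-functor at the fiberwise zero object of $D$ provides a zero object lying in $\Fun_{T,*}(C,D)$. The full subcategory $\Lin_T(C,D)$ is then closed under finite limits, because the linearity condition $F \simeq \Omega_T F \Sigma_T$ is preserved under limits of $F$: indeed $\Omega_T$ is defined as a pullback and so commutes with finite limits, fiber by fiber.

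For the third property, I define two endofunctors of $\Lin_T(C,D)$: post-composition with $\Omega_T$ gives the loop functor $\Omega$, and pre-composition with $\Sigma_T$ gives a candidate inverse $\Sigma$. The main things to verify are that each of these lifts to $\Lin_T(C,D)$, and that they are mutually inverse. For the first, $\Omega_T \circ F$ remains fiberwise linear because $\Omega_T$ commutes with the finite limits used in the linearity condition of $F$, and $F \circ \Sigma_T$ is fiberwise linear by invoking $F$'s linearity equivalence at objects of the form $\Sigma_T X$. Both composites $\Omega\Sigma$ and $\Sigma\Omega$ then coincide with the endofunctor $F \mapsto \Omega_T F \Sigma_T$, which is naturally equivalent to the identity on $\Lin_T(C,D)$ by the very definition of fiberwise linearity. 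Hence $\Omega$ is an equivalence.

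I do not anticipate a serious obstacle. Every ingredient (the linearity condition, $\Sigma_T$, $\Omega_T$, and the computation of limits in $\underline{\Fun}_T(C,D)$) is defined fiberwise, so the argument reduces to the non-parametrized statement (essentially \cite[Pr.~1.4.2.16]{HA} and its consequences) applied in each fiber and assembled parametrically. The mildly subtle point is simply making sure that the suspension and loop endofunctors on $\Lin_T(C,D)$ are well-defined $\infty$-functors rather than mere bijections on equivalence classes, but this follows from the functoriality of the fiberwise constructions $\Sigma_T$ and $\Omega_T$ on $C$ and $D$.
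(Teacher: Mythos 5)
Your proof is correct and follows essentially the same route as the paper's: both verify that $\Lin_T(C,D)$ is pointed with finite limits and then invoke the criterion from \cite[Sec.~1.4.2]{HA} by identifying the loop functor with postcomposition by $\Omega_T$ and exhibiting precomposition by $\Sigma_T$ as its inverse, the only difference being that you spell out the closure and invertibility checks the paper dismisses as ``clear'' and ``obvious.'' One small polish: the cleanest reason that $F\Sigma_T$ is again fiberwise linear is that $\Sigma_T$ is a fiberwise pushout construction and hence preserves finite fiberwise colimits, so precomposition with it preserves linearity --- this is tighter than ``invoking $F$'s linearity equivalence at objects of the form $\Sigma_T X$.''
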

    \begin{proof}
        It is clear that $\Lin_T(C,D)$ has finite limits and that it is pointed. If we show that $\Omega$ is an equivalence we are done by proposition 1.4.2.24 of \cite{HA}. But $\Omega$ is just postcomposition with $\Omega_T:\fromto{D}{D}$ and then it is obvious that precomposition with $\Sigma_T:\fromto{C}{C}$ is an inverse.
    \end{proof}
    
    \begin{dfn}
        We say that a $T$-$\infty$-category $D$ with all finite fiberwise limits and colimits is fiberwise stable if all fibers $D_b$ are stable.    
    \end{dfn}
    
    \begin{cnstr}
    If $C$ is a $T$-$\infty$-category we want to construct a fiberwise stabilization, that is the universal source of a fiberwise linear $T$-functor to $C$. 
    Let $\mathcal{E}(D)$ be the simplicial set over $T^\op$ such that
    \[\Mor_{/T^\op}(K,\mathcal{E}(D)) \cong \Mor_{/T^\op}(K\times \Top^{fin}_*,D)\]
    (this is an instance of the pairing construction of \cite[Cor.~3.2.2.13]{HTT}). The fiber over $b\in T^\op$ is the category $\Fun(\Top^{fin}_*,D_b)$.
    
    We let $\underline{\Sp}_T(D)$ be the simplicial subset of $\mathcal{E}(D)$ consisting of all simplices whose vertices are linear functors $\Top^{fin}_*\to D_b$. This is the same simplicial set denoted by $Stab(D)$ in \cite[Cn.~6.2.2.2]{HA}. It comes equipped with a natural map of simplicial sets $\Omega^\infty:\underline{\Sp}_T(D)\to D$ over $T^\op$ that on vertices is evaluation at $S^0$.
    \end{cnstr}
    
    \begin{prp}\label{prp:fiberwise-spectra}
        The map $\fromto{\underline{\Sp}_T(D)}{T^\op}$ is a fiberwise stable $T$-$\infty$-category.
        
        Moreover the natural functor $\Omega^{\infty}\colon\fromto{\underline{\Sp}_T(D)}{D}$ is a fiberwise left exact $T$-functor and for every pointed $T$-$\infty$-category $C$ with finite $T$-colimits the induced map
        \[\fromto{\underline{\Fun}_T^{fb-\mathrm{rex}}(C,\underline{\Sp}_T(D))}{\underline{\Lin}_T(C,D)}\]
        is an equivalence of categories.
    \end{prp}
    \begin{proof}
        From \cite[Cor.~3.2.2.13]{HTT} it follows immediately that $\mathcal{E}(D)$ is a cocartesian fibration whose cocartesian edges are those maps $(\Delta^1)^\sharp\times (\Top^{fin}_*)^\flat\to D^\natural$ that are marked. So to prove that $\underline{\Sp}_T(D)\to T^\op$ is a cocartesian fibration we need only to prove that it contains all cocartesian edges whose source is in it (that is, that $\underline{\Sp}_T(D)$ is closed under pushforward). But, by our description of cocartesian edges, the pushforward functor along an edge $e:b\to b'$ of $\mathcal{E}$ is given by
        \[(\delta_e)_*:\mathcal{E}_{b'}=\Fun(\Top^{fin}_*,D_{b'})\to \mathcal{E}_b=\Fun(\Top^{fin}_*,D_b)\,,\]
        that is postcomposition with the pushforward in $D$. Since the pushforward in $D$ preserves finite limits by definition, $(\delta_e)_*$ preserves linear functors and so $\underline{\Sp}_T(D)$ is a $T$-$\infty$-category.
        
        Note that the fiber of $\underline{\Sp}_T(D)$ over $b\in T$ is exactly the stabilization of the fiber $D_b$ and that the pushforward functors between fibers of $\underline{\Sp}_T(D)$ are the functors induced by the pushforward between the fibers of $D$. So the cocartesian fibration $\underline{\Sp}_T(D)$ has all finite fiberwise limits and colimits and is fiberwise stable. Moreover the functor $\fromto{\underline{\Sp}_T(D)}{D}$ is a $T$-functor preserving $T$-limits.
        
        Finally let us prove the universal property. Since the fibers of 
        \[\underline{\Fun}_T^{fb-\mathrm{lex}}(C,\underline{\Sp}_T(D))\textrm{ and }\underline{\Lin}_T(C,D)\]
        over $b\in T$ are 
        \[\Fun_{T_{/b}}^{fb-\mathrm{lex}}(C\times_{T^\op} T^\op_{b/},\underline{\Sp}_{T_{/b}}(D\times_{T^\op} T^\op_{b/})) \textrm{ and }\Lin_{T_{/b}}(C\times_{T^\op} T^\op_{b/},D\times_{T^\op} T^\op_{b/})\]
        respectively, up to replacing $T$ by its slice $T_{/b}$ it is enough to prove that the functor
        \[(\Omega^\infty)_*:\fromto{\Fun_T^{fb-\mathrm{lex}}(C,\underline{\Sp}_T(D))}{\Lin_T(C,D)}\]
        is an equivalence of categories (since being an equivalence can be checked on every fiber). Observe that $\Fun_T^{fb-\mathrm{lex}}(C,\underline{\Sp}_T(D))$ and $\Sp(\Lin_T(C,D))$ are the same subcategory of $\Fun_T(C\times \Top^{fin},D)$, because both are spanned by the functors $F:\fromto{C\times \Top^{fin}_T}{D}$ whose restriction to $C_b\times\Top^{fin}$ lie in $\Fun^{lex}(C_b,\Sp(D_b)) = \Sp(\Lin(C_b,D_b))$ for any $b\in T$. Then the thesis is obvious because $\Lin(C_b,D_b)$ is stable.
    \end{proof}


\section{Categories of finite $T$-sets}\label{sect:Gfin}

\begin{dfn}\label{dfn:orbital-category}
A small $\infty$-category $T$ is said to be \emph{orbital} if the category $\FF_T$ of definition \ref{def:finite-sets} has all pullbacks. An orbital category $T$ is \emph{atomic} if there are no nontrivial retracts, that is if every map with a left inverse is an equivalence.

A more in depth treatment of orbital $\infty$-categories can be found in \cite{Exp3}.
\end{dfn}

\begin{exm} The following are examples of atomic orbital categories:
    \begin{itemize}
        \item The orbit category $\OO_G$ of a (pro)finite group is atomic orbital;
        \item The category of finite sets and surjections is atomic orbital;
        \item In general any epiorbital category (\cite[Df.~2.1]{glasman-stratified}) is atomic orbital;
        \item All $\infty$-groupoids are atomic orbital categories;
        \item Every cosieve of an atomic orbital category is atomic orbital;
        \item More generally, the total category of every right fibration over an atomic orbital $\infty$-category is atomic orbital;
        \item The cyclonic orbit category of \cite[Df.~1.10]{cyclonic} is atomic orbital;
        \item The category of connected groupoids (that is groupoids of the form $BG$ for a finite group $G$) and covering maps is atomic orbital. This is the full subcategory of the global orbit category of \cite[Cn.~8.32]{schwedeglobal} spanned by the completely universal finite subgroups of $\mathcal{L}$.
    \end{itemize}
\end{exm}

\begin{nul}
    For the remainder of this paper, $T$ will be a fixed atomic orbital category. We will now construct $T$-$\infty$-categories of finite $T$-sets that will be used to parametrize the various multiplications composing the structure of a $T$-commutative monoid.
\end{nul}

\begin{dfn}\label{exm:finiteGsets} We want to construct the $T$-category classified by the functor $T\to\Cat_\infty$ sending $V$ to $\FF_{T_{/V}}$. We contemplate the arrow $\infty$-category $\Fun(\Delta^{1},\FF_{T})$ of the $\infty$-category $\FF_{T}$ of finite $T$-sets. Since $\FF_{T}$ admits all pullbacks, the target functor
\[\fromto{\Fun(\Delta^{1},\FF_{T})}{\Fun({\{1\}},\FF_{T})\cong\FF_T}\]
is a cartesian fibration. We may pull it back along the fully faithful inclusion $\into{T}{\FF_{T}}$ to obtain a cartesian fibration
\[\tau\colon\fromto{\Fun(\Delta^{1},\FF_{T})\times_{\Fun({\{1\}},\FF_{T})}T}{T}.\]
It is classified by the functor $\fromto{T^{\op}}{\Cat_{\infty}}$ that carries an orbit $V$ to the $\infty$-category $\FF_{T_{/V}}$.

We now write
\[p\colon\fromto{\underline{\FF}_T}{T^{\op}}\]
for the dual cocartesian fibration $\tau^{\vee}$ (constructed in \cite{BGN}) to the cartesian fibration $\tau$. This is now a $T$-$\infty$-category, called the $T$-$\infty$-category of finite $T$-sets, and once again it is classified by the functor $\fromto{T^{\op}}{\Cat_{\infty}}$ that carries an orbit $V$ to the nerve of the $\infty$-category $\FF_{T_{/V}}$. Its objects are arrows $I=[U\to V]$ with $U\in \FF_T$ and $V\in T$ and an arrow $[U\to V]\to [U'\to V']$ is a diagram
\begin{equation*}
\begin{tikzpicture}
\matrix(m)[matrix of math nodes,
row sep=4ex, column sep=4ex,
text height=1.5ex, text depth=0.25ex]
{U& W & U'\\
 V & V' & V'\\};
\path[>=stealth,->,font=\scriptsize]
(m-1-1) edge (m-2-1)
(m-1-2) edge (m-1-1)
        edge (m-1-3)
        edge (m-2-2)
(m-2-2) edge (m-2-1)
        edge[-,double distance=1.5pt] (m-2-3)
(m-1-3) edge (m-2-3);
\end{tikzpicture}
\end{equation*}
where the left square is cartesian. Composition is then defined by forming suitable pullbacks. The target functor
\begin{equation*}
\goesto{[\fromto{U}{V}]}{V}
\end{equation*}
is the structure map $p\colon\fromto{\underline{\FF}_T}{T^\op}$.
\end{dfn}

\begin{exm}
If $T=\OO_G$ is the orbit category of a profinite group $G$, then $\underline{\FF}_G\to \OO_G^\op$ is the cocartesian fibration classified by the functor sending an orbit $G/H$ to the category of finite $H$-sets (under the canonical identification that sends a finite $G$-set over $G/H$ to the fiber over $eH$).
\end{exm}

\begin{exm}\label{exm:IV} Suppose $V$ is an object of $T$. Then there is an object
\[I(V)=[\id\colon\fromto{V}{V}]\]
of $\underline{\FF}_T$, which enjoys the following property. For any object $J=[g\colon\fromto{X}{Y}]$ of $\underline{\FF}_T$, one has an equivalence
\[\Map_{\underline{\FF}_T}(J,I(V))\simeq\Map_{T}(V,Y).\]
In particular, the assignment $\goesto{V}{I(V)}$ defines a fully faithful right adjoint to the structure map $p\colon\fromto{\underline{\FF}_T}{T^{\op}}$.
\end{exm}

\begin{nul}
    In what follows it will be convenient to have at our disposal more general categories whose objects are finite $T$-sets. They will be all more easily manipulated as subcategories of the Burnside category of finite $T$-sets. The latter is the dual version of the main construction in \cite{Aefffib}, but we will repeat it here both because of its simplicity and because we will need to use some details in our main results.
\end{nul}

\begin{cnstr}
Let again us consider the cartesian fibration
\[\tau\colon\fromto{S_T:=\Fun(\Delta^{1},\FF_{T})\times_{\Fun({\{1\}},\FF_{T})}T}{T}.\]
It is also, for much easier reasons, a cocartesian fibration.

With this in mind, we now proceed to define triple structures on these $\infty$-categories. Denote by $\iota T\subset T$ the subcategory consisting of the equivalences of $T$. Then we can contemplate the triple structures
\[(T,\iota T,T)\textrm{\quad and\quad}(S_T,S_T\times_T\iota T,S_T).\]
It is a simple matter to see that these triple structures are adequate in the sense of \cite[Df.~5.2]{M1}. We may therefore construct their effective Burnside $\infty$-categories, and the projection induces a functor
\[t'\colon\fromto{A^{\eff}(S_T,S_T\times_T\iota T,S_T)}{A^{\eff}(T,\iota T,T)}.\]

An object of $A^{\eff}(S_T,S_T\times_T\iota T,S_T)$ is a morphism $[\fromto{U}{V}]$ of finite $T$-sets in which $V\in T$. If
\[I=[\fromto{U}{V}]\textrm{\quad and\quad}J=[\fromto{X}{Y}]\]
are two objects, then a morphism $\fromto{I}{J}$ of $A^{\eff}(S_T,S_T\times_T\iota T,S_T)$ is a commutative diagram
\begin{equation*}
\begin{tikzpicture}
\matrix(m)[matrix of math nodes,
row sep=4ex, column sep=4ex,
text height=1.5ex, text depth=0.25ex]
{U&W&X\\
V&Z&Y\\};
\path[>=stealth,->,font=\scriptsize,inner sep=0.5pt]
(m-1-1) edge (m-2-1)
(m-1-2) edge (m-1-1)
edge (m-2-2)
edge (m-1-3)
(m-1-3) edge (m-2-3)
(m-2-2) edge (m-2-1)
edge node[below]{$\sim$} (m-2-3);
\end{tikzpicture}
\end{equation*}
in which the morphism $\equivto{Z}{Y}$ is an equivalence in $T$.
\end{cnstr}

\begin{lem} The functor $t'$ above is both a cartesian and a cocartesian fibration. Furthermore, any morphism of $A^{\eff}(S_T,S_T\times_T\iota T,S_T)$ represented as a commutative diagram
\begin{equation*}
\begin{tikzpicture}
\matrix(m)[matrix of math nodes,
row sep=4ex, column sep=4ex,
text height=1.5ex, text depth=0.25ex]
{U&W&X\\
V&Z&Y\\};
\path[>=stealth,->,font=\scriptsize,inner sep=0.5pt]
(m-1-1) edge  (m-2-1)
(m-1-2) edge (m-1-1)
        edge (m-2-2)
        edge (m-1-3)
(m-1-3) edge (m-2-3)
(m-2-2) edge (m-2-1)
        edge node[below]{$\sim$} (m-2-3);
\end{tikzpicture}
\end{equation*}
is
\begin{itemize}
    \item $t'$-cartesian if the morphisms $\equivto{W}{U}$ and $\equivto{W}{X}$ are equivalences;
    \item $t'$-cocartesian if the left square is cartesian and $\equiv{W}{X}$ is an equivalence.
\end{itemize}
\begin{proof} This follows immediately from (the opposite of) the ``omnibus theorem'' for effective Burnside $\infty$-categories \cite[Th. 12.2]{M1}.
\end{proof}
\end{lem}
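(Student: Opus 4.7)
The plan is to apply Barwick's omnibus theorem for effective Burnside $\infty$-categories ([M1, Th.~12.2]) to the functor of adequate triples $\tau: (S_T, S_T\times_T \iota T, S_T) \to (T, \iota T, T)$. The phrase ``opposite of'' in the cited hint is significant: because the effective Burnside construction exchanges the roles of the two distinguished classes of morphisms, the most direct application of the omnibus theorem produces $t'$-cartesian edges from $\tau$-cocartesian lifts and $t'$-cocartesian edges from $\tau$-cartesian lifts, which matches the asymmetric pattern that appears in the displayed characterization of the two types of edges.

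Next I would verify the hypotheses of the omnibus theorem. For every egressive morphism of $T$ (which is any morphism, by definition) we need both an egressive cartesian lift and an egressive cocartesian lift in $S_T$. Both exist because $\tau$ is a cartesian fibration, with the cartesian lift of $V'\to V$ at $[U\to V]$ given by the pullback $[U\times_V V'\to V']$ (using that $T$ is orbital, so pullbacks exist in $\FF_T$), and a cocartesian fibration, with cocartesian lift obtained by post-composition $[U\to V]\mapsto[U\to V\to V']$. For every ingressive morphism of $T$ (an equivalence) the required ingressive lifts reduce to the trivial observation that equivalences in $S_T$ project to equivalences in $T$ and are hence ingressive in the source triple. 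The Beck--Chevalley compatibility needed to glue these lifts into a functor on effective Burnside categories follows from pullback-stability of pullbacks in $\FF_T$.

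The omnibus theorem then identifies the (co)cartesian edges explicitly. A $t'$-cartesian lift ending at $J = [X\to Y]$ of a morphism in $A^{\eff}(T,\iota T,T)$ represented by $V\xleftarrow{} Y\xrightarrow{\sim} Y$ (i.e.\ a morphism $Y\to V$ in $T$) is obtained from the $\tau$-cocartesian lift, giving source $I = [X\to V]$ by composition; the connecting span has $W = X = U$, forcing both $W\to U$ and $W\to X$ to be equivalences. Dually, a $t'$-cocartesian lift starting at $I = [U\to V]$ of the same base data is given by the $\tau$-cartesian lift, namely $[U\times_V Y\to Y]$, so the left square of the span is forced to be cartesian in $\FF_T$ and $W\to X$ is an equivalence. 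The main obstacle is purely notational bookkeeping: aligning the asymmetric treatment of ingressives versus egressives in the omnibus theorem with the conventions of this paper so that the four assertions (two fibration structures plus two edge characterizations) come out in the precise form of the statement.
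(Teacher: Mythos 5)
Your proposal is correct and takes essentially the same route as the paper, whose entire proof is the citation of (the opposite of) the omnibus theorem \cite[Th.~12.2]{M1} applied to the functor of adequate triples $\tau\colon(S_T,S_T\times_T\iota T,S_T)\to(T,\iota T,T)$. Your verification of the hypotheses (cartesian lifts by pullback, using orbitality of $T$; cocartesian lifts by postcomposition; trivial ingressive lifts of equivalences) and, crucially, your correct handling of the swap whereby $t'$-cartesian edges arise from $\tau$-cocartesian lifts and $t'$-cocartesian edges from $\tau$-cartesian lifts --- which is exactly what the parenthetical ``opposite of'' encodes, via the self-duality $A^{\eff}(C,C_\dagger,C^\dagger)^{\op}\simeq A^{\eff}(C,C^\dagger,C_\dagger)$ --- reproduce the lemma's stated characterization of both classes of edges.
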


\begin{dfn} We have an inclusion $\into{T^{\op}}{A^{\eff}(T,\iota T,T)}$, which is a weak equivalence. Write
\[\underline{\AA}^{\eff}(T)\coloneq A^{\eff}(S_T,S_T\times_T\iota T,S_T)\times_{A^{\eff}(T,\iota T,T)}T^{\op};\]
the projection $\fromto{\underline{\AA}^{\eff}(T)}{A^{\eff}(S_T,S_T\times_T\iota T,S_T)}$ is thus an equivalence, and the projection
\[t\colon\fromto{\underline{\AA}^{\eff}(T)}{T^{\op}}\]
is a cartesian and cocartesian fibration, so it is a $T$-$\infty$-category.

It is classified by the functor sending $V\in T$ to the Burnside category $A^{\eff}(\FF_{T_{/V}})$.
\end{dfn}

\begin{nul}
    Note that $\underline{\FF}_T$ is naturally a $T$-subcategory of $\underline{\AA}^{\eff}(T)$, consisting of all objects and all morphisms such that the left square is cartesian. This is the analogue of the classical inclusion of the category $\FF$ of finite sets inside the Burnside category of finite sets $A^{\eff}(\FF)$ by considering only the egressive maps.
    
    This can be extended to an inclusion of \emph{pointed} finite sets $\FF_*$ inside $A^{\eff}(\FF)$ as the subcategory containing all objects and as maps the spans $[I\ot \tilde I \to I']$ such that the ``left leg'' is an inclusion ($\tilde I$ under this identification corresponds to the preimage of $I'$ under the map $I_+\to {I'}_+$, so that we can identify $\FF_*$ with the category of finite sets and partially defined maps. It will be convenient for us to turn this into the definition of finite pointed $T$-sets.
\end{nul}

\begin{dfn}\label{exm:pointedGsets}
We'll say that a map $U\to U'$ of finite $T$-sets is a \emph{summand inclusion} if there is $U''\to U'$ such that the map $U\amalg U''\to U'$ is an equivalence.

Consider the subcategory of $\underline{\AA}^{\eff}(T)$ containing all objects and whose morphisms are those diagrams
\begin{equation*}
\begin{tikzpicture}
\matrix(m)[matrix of math nodes,
row sep=4ex, column sep=4ex,
text height=1.5ex, text depth=0.25ex]
{U& \tilde U & U'\\
 V & V' & V'\\};
\path[>=stealth,->,font=\scriptsize]
(m-1-1) edge (m-2-1)
(m-1-2) edge (m-1-1)
        edge (m-1-3)
        edge (m-2-2)
(m-2-2) edge (m-2-1)
        edge[-,double distance=1.5pt] (m-2-3)
(m-1-3) edge (m-2-3);
\end{tikzpicture}
\end{equation*}
such that the arrow $\tilde U\to U\times_V V'$ is a summand inclusion (this is a condition of the left square of the diagram and does not depend on the particular choice of pullback $U\times_V V'$). This subcategory contains all cocartesian morphisms of $\underline{\AA}^{\eff}(T)\to T^\op$ and so it is a $T$-subcategory. We will call it the \emph{$T$-$\infty$-category of finite pointed $T$-sets} and denote it by $\underline{\FF_*}_T$.
\end{dfn}

\begin{ntn}\label{ntn:subscript_+}
    We will often decorate an object $I=[U\to V]$ of $\underline{\FF_*}_T$ with a subscript $+$, to remind ourselves that we see it as living in $\underline{\FF_*}_T$ rather than of $\underline{\FF}_T$ or $\underline{\AA}(T)$. The $+$ does not have any real meaning (in our construction there are no ``basepoints'') and it is only a mnemonic aid. The canonical inclusion $\fromto{\FF^T}{\FF^T_*}$ will be indicated by $(-)_+\colon I\mapsto I_+$.
\end{ntn}

\begin{lem}
The cocartesian fibration $\fromto{\underline{\FF_*}_T}{T^\op}$ is classified by the functor sending $V$ to the category of pointed objects in $(\FF_T)_{V/}$.

Moreover the canonical inclusion $(-)_+\colon\fromto{\underline{\FF}_T}{\underline{\FF_*}_T}$ has a right $T$-adjoint sending $[U\to V]$ to $[U\amalg V\to V]$.
\end{lem}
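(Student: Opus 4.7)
The plan is to verify both statements fiberwise, and then globalize using the standard criteria for relative cocartesian adjoints.

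\textbf{Part 1 (the classifying functor).} First I would unpack morphisms in the fiber of $\underline{\FF_*}_T$ over $V\in T^\op$: such a morphism $[U\to V]\to[U'\to V]$ is represented by a diagram of the shape in Definition~\ref{exm:pointedGsets} with identity bottom row, so it consists of a summand inclusion $\tilde U\hookrightarrow U\times_V V=U$ over $V$ together with a map $\tilde U\to U'$ over $V$. This is precisely a partial map of finite $T_{/V}$-sets. The classical equivalence between finite sets with partial maps and pointed finite sets via adjoining a disjoint basepoint parametrizes to an equivalence between this fiber and the category of pointed objects in $\FF_{T_{/V}}$, carrying $[U\to V]$ to the pointed object $(U\amalg V,\ V\hookrightarrow U\amalg V)$. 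Naturality in $V$ reduces to checking that the cocartesian pushforward along $f\colon V'\to V$ inherited from $\underline{\AA}^{\eff}(T)$ sends $[U\to V]$ to $[U\times_V V'\to V']$, which under the equivalence matches the pullback of pointed objects, because $(U\amalg V)\times_V V'\simeq(U\times_V V')\amalg V'$ pointed by the $V'$-summand.

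\textbf{Part 2 (the right adjoint).} Under the identification of Part~1, the $T$-functor $(-)_+$ restricts fiberwise to the ``free pointed object'' functor $X\mapsto X\amalg V$ with its canonical basepoint $V\hookrightarrow X\amalg V$, and its classical right adjoint is the forgetful functor $(W,w)\mapsto W$. Translating back, this forgetful functor sends $[U\to V]\in\underline{\FF_*}_T$ (which corresponds to the pointed object $(U\amalg V,V)$) to $[U\amalg V\to V]\in\underline{\FF}_T$, which is the stated formula. To upgrade the pointwise right adjoints $R_V$ to a $T$-right adjoint I would check that the $R_V$ are compatible with cocartesian pushforwards: both pushforwards along $f\colon V'\to V$ are computed as pullback along $f$, and ``forget the basepoint'' commutes with this pullback since the underlying $T$-set of the pullback of pointed objects is the pullback of the underlying $T$-sets (using that pullback in $\FF_T$ distributes over coproducts). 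By the dual of the standard criterion for relative adjoints (\cite[Pr.~7.3.2.6]{HA}), the $R_V$ then assemble into a $T$-functor $R\colon\underline{\FF_*}_T\to\underline{\FF}_T$ that is right $T$-adjoint to $(-)_+$ with the required formula. As a sanity check, the unit $[U\to V]\to[U\amalg V\to V]$ is the evident summand inclusion in $\underline{\FF}_T$, and the counit on $[U\to V]$ is the partial map $[U\amalg V\to V]\rightharpoonup[U\to V]$ that is the identity on the $U$-summand and undefined on $V$; verifying the triangle identities is a direct calculation.

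The real substance is Part~1, which supplies the correct dictionary; once the pointed-object description is in place, the adjunction in Part~2 reduces to the classical adjunction between adding a disjoint basepoint and forgetting it. The only potentially fiddly step is to double-check that the equivalence of Part~1 is natural on morphisms and not merely on objects, but this is exactly the standard bijection between partial maps $U\rightharpoonup U'$ and basepoint-preserving maps $U_+\to U'_+$, applied slicewise over $V$.
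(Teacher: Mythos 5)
Your proof is correct and takes essentially the same route as the paper's: both identify the fiber of $\underline{\FF_*}_T$ over $V$ with pointed objects of $\FF_{T_{/V}}$ via the partial-map/disjoint-basepoint dictionary, recognize the fiberwise adjunction as the classical free--forgetful adjunction, and promote the fiberwise right adjoints to a right $T$-adjoint via \cite[Pr.~7.3.2.6]{HA} together with the universality and disjointness of coproducts in $\FF_T$. If anything, your verification that the forgetful functors commute with cocartesian pushforwards spells out a step the paper dispatches in one line.
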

\begin{proof}
    The fiber of $\underline{\FF_*}_T$ over $V$ consists in the Burnside category of $\FF_{T_{/V}}$ where the egressive morphisms are the summand-inclusions. We can identify this with the category of pointed objects by sending a span
    \[U \ot \tilde U \to U'\]
    where $U = \tilde U \amalg W$ to the map
    \[U\amalg V \to U'\amalg V\]
    where the central map is $U\amalg V = \tilde U \amalg (W\amalg V)\to U'\amalg V$, given by $\tilde U\to U'$ on the first component and the structure map to $V$ on the second component.
    
    Now it is clear that the functor
    \[[U\amalg V\to V]\mapsto [U\amalg V\to V]\]
    is the right adjoint to the inclusion of $\FF_{T_{/V}}$ into pointed objects. So in order to have a $T$-adjunction we need only to verify that the right adjoint provided by \cite[Pr.~7.3.2.6]{HA} is a $T$-functor, but this follows from the universality of finite coproducts in $\FF_T$ and the fact that coproducts therein are disjoint.
\end{proof}

\section{$T$-semiadditive functors and $T$-semiadditive categories}

\begin{ntn}\label{ntn:characteristic_map}
    Let $I=[U\to V]\in\underline{\FF}_T$ and $W\in \mathrm{Orbit}(U)$, then the canonical map $W\to U\times_V W$ must be a summand-inclusion, since it factors through an unique orbit, of which $W$ is a retract. So we can define the \emph{characteristic map}
    \[\chi_{[W\subseteq U]}\colon I_+\to I(W)_+\]
    (where $I(W)$ is the construction of example \ref{exm:IV}) as the map of pointed finite $T$-sets described by the following diagram
    \begin{equation*}
\begin{tikzpicture}
\matrix(m)[matrix of math nodes,
row sep=4ex, column sep=4ex,
text height=1.5ex, text depth=0.25ex]
{U& W & W\\
 V & W & W\\};
\path[>=stealth,->,font=\scriptsize]
(m-1-1) edge (m-2-1)
(m-1-2) edge (m-1-1)
        edge[-,double distance=1.5pt] (m-1-3)
        edge[-,double distance=1.5pt] (m-2-2)
(m-2-2) edge (m-2-1)
        edge[-,double distance=1.5pt] (m-2-3)
(m-1-3) edge[-,double distance=1.5pt] (m-2-3);
\end{tikzpicture}\,.
\end{equation*}
Note that this map is in $\underline{\FF_*}_T$ due to the fact that, thanks to the atomicity of $T$, the map $W\to U\times_V W$ is a summand inclusion, since the orbit it factors through retract onto $W$..
\end{ntn}

\begin{cnstr}
Let $C$ be a pointed $T$-$\infty$-category with all finite $T$-coproducts, $I=[U\to V]\in\underline{\FF}_T$ and $X\in\Fun_T(\UU,C)$ be a diagram. Then there is a map induced on the colimits
\[(\chi_{[W\subseteq U]})_*:\delta_{W/V}\coprod_I X\to X_{[W\subseteq U]}\,.\]
above $\chi_{[W\subseteq U]}$, where $\coprod_I$ is the left adjoint to $\delta_I:C_V\cong\Fun_T(\VV,C)\to \Fun_T(\UU,C)$ and $X_{[W\subseteq U]}$ is the value of $X$ at $[W\subseteq U]\in\UU$. We can describe it as follows: by the base change condition in proposition \ref{prp:B-colims} we have
\[\delta_{W/V} \coprod_I X \cong \coprod_{[U\times_V W/W]}\delta_{[U\times_V W/U]} X\]
As before, the atomicity of $T$ implies that we can write
\[U\times_V W\cong W \amalg \tilde U\]
where $W$ on the right hand side is the diagonal copy. Hence
\[\delta_{W/V} \coprod_I X \cong X_{[W\to U]} \amalg \coprod_{[\tilde U\to U]} X\,.\]
So we can define a map
\[(\chi_{[W\subseteq U]})_*:\delta_{W/V}\coprod_I X \to X_{[W\subseteq U]}\]
which is the identity on the first summand and the zero map on the other.

If $D$ is a $T$-$\infty$-category with finite $T$-products and $F:C\to D$ a $T$-functor we will denote by $(\chi_{[W\subseteq U]})_*$ also the natural transformation
\[F\left(\coprod_I X\right) \to \prod_{W/V}F(X)\]
obtained by adjunction on $F(\chi_{[W\subseteq U]})_*$, where $\prod_{W/V}$ is the right adjoint of $\delta_{W/V}$.
\end{cnstr}

\begin{dfn}\label{dfn:additive-functor}
    Let $C$ be a pointed $T$-$\infty$-category with all finite $T$-coproducts and $D$ a $T$-$\infty$-category with all finite $T$-products. Then a $T$-functor $F:C\to D$ is said to be \emph{$T$-semiadditive} if for every $I=[U\to V]\in\underline{\FF}_T$ and $X\in\Fun_T(\UU,C)$ the map
    \begin{equation}\label{eqn:additive-functor}
        \prod_{W\in\mathrm{Orbit}(U)}(\chi_{[W\subseteq U]})_*:F\left(\coprod_I X\right)\to \prod_{W\in\mathrm{Orbit}(U)} \prod_{W/V} F(X_{[W\subseteq U]})\cong \prod_I F(X)
    \end{equation}
    is an equivalence. We will denote the $T$-$\infty$-category of all $T$-semiadditive $T$-functors with $\underline{\Fun}^\oplus_T(C,D)$.
    
    We say that a pointed $T$-$\infty$-category with all finite $T$-products and $T$-coproducts is \emph{$T$-semiadditive} if the identity functor is $T$-semiadditive. That is, if the map
    \begin{equation}\label{eqn:additive-category}
        \coprod_I X\to \prod_I X
    \end{equation}
    is an equivalence for every $I$-uple $X$.
\end{dfn}

It is clear that if $F:C\to D$ preserves finite $T$-coproducts and $G:D\to E$ is $T$-semiadditive then the composition $GF$ is $T$-semiadditive. Similarly if $F$ is $T$-semiadditive and $G$ preserves finite $T$-products.

\begin{exm}
    Let $C$ be a pointed $T$-$\infty$-category with all finite $T$-coproducts and $D$ an $\infty$-category with all finite products. Then the category of $T$-objects $\underline{D}_T$ has all finite $T$-products and a $T$-functor $C\to \underline{D}_T$ is $T$-semiadditive if and only if the associated functor $F:C\to D$ is such that for every $[U\to V]\in\underline{\FF}_T$ the map
    \[F\left(\coprod_I X\right)\to \prod_{W\in\textrm{Orbit}(U)} F(X_{[W\subseteq U]})\]
    is an equivalence. In particular if $D$ is semiadditive (i.e. it has biproducts), then $\underline{D}_T$ is $T$-semiadditive.
\end{exm}
\begin{exm}\label{exm:Burnside-semiadditive}
    The $T$-$\infty$-category $\AA^{\eff}(T)$ is $T$-semiadditive. In fact every fiber is semiadditive by proposition 4.3 of \cite{M1}, so it is sufficient to observe that for any arrow $W\to V$ in $T$ the functor
    \[\delta_{W/V}\colon A^{\eff}(T_{/V})\to A^{\eff}(T_{/W})\]
    has well behaved left and right adjoints and the canonical comparison map is an equivalence.
\end{exm}
\begin{cnstr}\label{cnstr:semiadditive-with-products}
    Let $C$ be a pointed $T$-$\infty$-category with finite $T$-products. Then if $I=[U\to V]\in \FF^T$ and $X:\UU\to C$, let us consider for every $W\in\mathrm{Orbit}(U)$ the map of $C_V$
    \[\eta_{[W\subseteq U]}:\into{\coprod_{W/V}X_{[W\to U]}}{\coprod_I X} \to \prod_I X\,.\]
    This can be described as the adjoint to the map in the fiber over $W$
    \[X_{[W\subseteq U]}\to \delta_{W/V}\prod_{I_W} X\cong \prod_{W'\in\mathrm{Orbit}(U\times_VW)} \prod_{W'/W} X_{[W'\to U]}\,,\]
    given by the identity map $X_W\to \prod_{W'/W}X_{W'}$ when $W'$ is the diagonal copy $W$ in $U\times_VW$ and the zero map on the other components.
    
    Then $C$ being $T$-semiadditive is equivalent to the fact that $\{\eta_W\}_{W\in\mathrm{Orbit}(U)}$ assemble to an equivalence 
    \[\coprod_I X \cong \coprod_{W\in\mathrm{Orbit}(U)} \coprod_{W/V} X_{[W\subseteq U]} \xrightarrow{\coprod \eta_{[W\subseteq U]}} \prod_I X\,.\]
\end{cnstr}

The previous remark immediately yields the following criterion for determining when a category is $T$-semiadditive
\begin{lem}
    Let $C$ be a pointed $T$-$\infty$-category with finite products and suppose that for every $I=[U\to V]\in \FF^T$ there is a natural transformation
    \[\mu^I:\prod_I \Delta X \to X\]
    of functors $C_V\to C_V$, where $\Delta:C_V\to \Fun_T(\UU,C)$ is the functor of definition \ref{dfn:T-colims}, such that
    for every $W\in\mathrm{Orbit}(U)$ the composition
    \[\mu^I \circ \eta_{[W\subseteq U]}:\coprod_{W/V} \delta_{W/V}X\to X\]
    is homotopic to the counit of the adjunction $\coprod_{W/V} \dashv\delta_{W/V}$.
    Then $C$ is $T$-semiadditive.
\end{lem}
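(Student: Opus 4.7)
The strategy is to show that, for any $X\in\Fun_T(\UU,C)$, the family $\{\eta_{[W\subseteq U]}\colon \coprod_{W/V} X_{[W\to U]}\to \prod_I X\}_{W\in\mathrm{Orbit}(U)}$ exhibits $\prod_I X$ as a $T$-coproduct of $X$. By Construction~\ref{cnstr:semiadditive-with-products} the canonical comparison map $\sigma_X\colon \coprod_I X\to \prod_I X$ is exactly the map induced by these $\eta_{[W\subseteq U]}$'s out of the coproduct, so this suffices to show that $\sigma_X$ is an equivalence and hence that $C$ is $T$-semiadditive.

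The candidate two-sided inverse $\tau_X\colon \prod_I X\to \coprod_I X$ is built from $\mu^I$ as follows. Let $F\colon X\to \Delta\coprod_I X$ denote the unit of the $T$-adjunction $\coprod_I\dashv \Delta$, and set
\[\tau_X := \mu^I_{\coprod_I X}\circ\prod_I(F)\colon \prod_I X\longrightarrow \prod_I\Delta\coprod_I X\longrightarrow \coprod_I X.\]
The identity $\tau_X\circ \sigma_X=\mathrm{id}_{\coprod_I X}$ is verified by precomposing with each summand inclusion $\iota_W\colon \coprod_{W/V} X_{[W\to U]}\hookrightarrow \coprod_I X$: naturality of $\eta_{[W\subseteq U]}$ in the diagram variable gives $\prod_I(F)\circ \eta_{[W\subseteq U]}(X) = \eta_{[W\subseteq U]}(\Delta\coprod_I X)\circ \coprod_{W/V}(F_{[W\to U]})$, the hypothesis replaces $\mu^I\circ \eta_{[W\subseteq U]}(\Delta\coprod_I X)$ by the counit $\epsilon_{\coprod_I X}$ of $\coprod_{W/V}\dashv \delta_{W/V}$, and the triangle identity (together with the fact that $F_{[W\to U]}$ is adjoint to $\iota_W$) collapses the composite to $\iota_W$ itself.

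For the reverse identity $\sigma_X\circ \tau_X=\mathrm{id}_{\prod_I X}$, I would exploit the Beck--Chevalley decomposition $\prod_I\simeq \prod_{W\in\mathrm{Orbit}(U)}\prod_{W/V}$ coming from Proposition~\ref{prp:B-colims} and check the equality after projection to each orbit factor, combining naturality of $\mu^I$ in its codomain with the hypothesis to pin down each component. This is the main obstacle: unlike the first composition it is not purely formal, and the manipulation must exploit the atomicity of $T$ that governs the decomposition above. In practice the cleanest route may be to recast the whole argument as a universal property check, showing that for every family $g_W\colon \coprod_{W/V} X_{[W\to U]}\to Z$ the map $\widehat{g} := \mu^I_Z\circ \prod_I(G)$ (with $G\colon X\to\Delta Z$ assembling the adjoints of the $g_W$) is the unique factorization of the $g_W$'s through the $\eta_{[W\subseteq U]}$'s, uniqueness again following from the same naturality-plus-counit recipe.
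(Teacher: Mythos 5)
Your construction coincides with the paper's. The paper proves the lemma by showing that for every $Y\in C_V$ the restriction map
\[\prod_{W\in\mathrm{Orbit}(U)}(\eta_{[W\subseteq U]})^*\colon\Map_{C_V}\left(\prod_I X,Y\right)\to\prod_{W\in\mathrm{Orbit}(U)}\Map_{C_V}\left(\coprod_{W/V}X_{[W\subseteq U]},Y\right)\]
is an equivalence, with candidate inverse $g\mapsto\mu^I_Y\circ\prod_I(G)$ --- exactly the ``universal property check'' you propose at the end, of which your $\tau_X=\mu^I_{\coprod_I X}\circ\prod_I(F)$ is the Yoneda translation (the image of $\mathrm{id}_{\coprod_I X}$). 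Your verification of $\tau_X\circ\sigma_X\simeq\mathrm{id}$, equivalently that $\widehat{g}\circ\eta_{[W\subseteq U]}\simeq g_W$, is correct and is in fact more detail than the paper prints.

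The gap is in the other composite, and your fallback does not repair it: it is circular. The ``naturality-plus-counit recipe'' proves that $\widehat{g}$ \emph{is} a factorization, i.e.\ that restriction along the $\eta_{[W\subseteq U]}$'s is surjective on homotopy classes; uniqueness is injectivity of that restriction. If $f\circ\eta_{[W\subseteq U]}\simeq g_W$ for all $W$, then naturality of $\mu^I$ in its object variable gives $\widehat{g}\simeq f\circ\mu^I_{\prod_I X}\circ\prod_I(E)$, where $E\colon X\to\Delta\prod_I X$ has components the maps $e_{[W\subseteq U]}$ of Construction~\ref{cnstr:semiadditive-with-products}. So uniqueness --- equivalently $\sigma_X\circ\tau_X\simeq\mathrm{id}$ --- is precisely the assertion $\mu^I_{\prod_I X}\circ\prod_I(E)\simeq\mathrm{id}$, the very composite you set aside as ``the main obstacle''; the two halves of your plan are the same unproved statement. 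To actually close it, your first instinct is the right one: check the identity after each projection $\mathrm{pr}_W\colon\prod_I X\to\prod_{W/V}X_{[W\subseteq U]}$, observe that $\mathrm{pr}_W\simeq\prod_I(u^W)$ where $u^W$ collapses $X$ onto its extension by zero at the orbit $W$ (pointedness enters here, since $\prod_{W'/V}$ of a zero object is terminal), move $\mathrm{pr}_W$ through $\mu^I$ by naturality, and then apply the hypothesis in its transposed form $\delta_{W/V}(\mu^I_Z)\circ e_{[W\subseteq U]}(\Delta Z)\simeq\mathrm{id}_{\delta_{W/V}Z}$ under the adjunction $\coprod_{W/V}\dashv\delta_{W/V}$ to identify each component. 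For what it is worth, the paper itself elides this verification --- it writes down the inverse and asserts it is one --- so your write-up matches the paper's construction exactly; but as a complete argument it still owes this computation, and the claim that uniqueness follows ``by the same recipe'' is wrong as stated.
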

\begin{proof}
    We need to prove that for every $[U\to V]\in\underline{\FF}_T$, $X\in \Fun_T(\UU,C)$ and $Y\in C_V$, the map
    \[\prod_{W\in\mathrm{Orbit}(U)} (\eta_{[W\subseteq U]})^*:\Map_{C_V}\left(\prod_I X,Y\right)\to \prod_{W\in\mathrm{Orbit}(U)} \Map_{C_V}\left(\coprod_{W/V}X_{[W\subseteq U]},Y\right)\]
    is an equivalence. But using the $\mu^I$ we can construct an inverse
    \[\begin{split}(\mu^I_Y)_*\circ \prod_I:\prod_{W\in\mathrm{Orbit}(U)} \Map_{C_V}\left(\coprod_{W/V}X_{[W\subseteq U]},Y\right)\cong \Map_{\Fun(\UU,C)}\left(X,\Delta Y\right)\to\\
    \to \Map_{C_V}\left(\prod_I X,\prod_I \Delta I\right)\to \Map_{C_V}\left(\prod_I X,Y\right)\,.\end{split}\]
\end{proof}

\begin{prp}
    If $C$ is a pointed $T$-$\infty$-category with finite $T$-coproducts and $D$ is a $T$-$\infty$-category with finite $T$-products then $\underline{\Fun}^\oplus(C,D)$ is $T$-semiadditive
\end{prp}
\begin{proof}
    First let us note that by $I$ to be empty in \ref{eqn:additive-functor} every $T$-semiadditive functor must send the zero object of $C$ to the terminal object of $D$. Then for any additive functor $F$ the left $T$-Kan extension of the restriction to the zero object of $C$ is the constant functor at the terminal object, since the $T$-colimit of a constant functor at the zero object is the zero object. So, if $i:\{0\}\subseteq C$ is the inclusion of the zero object
    \[\Map_{\Fun_T^\oplus}(*,F) = \Map(i_!(F|_0),F) = \Map(F|_0,F|_0)=*\] 
    Hence the constant functor at the terminal object is the zero object of $\underline{\Fun}^\oplus_T(C,D)$.
    
    Then we need to prove that $\underline{\Fun}_T^\oplus(C,D)$ satisfies the hypothesis of the previous lemma. But this is easy: for $F$ a $T$-semiadditive $T$-functor remember that $\left(\prod_I F\right)(-)\cong F\left(\coprod_I-\right)$ so we can choose
    \[\mu^I:\left(\prod_I F\right)(-) \cong F\left(\coprod_I-\right)\to F(-)\]
    given by precomposition with the canonical map $id_C\to \coprod_I$ provided by the universal property of the coproduct of $C$. Since the required identites are easily verified we are done.
\end{proof}

\begin{dfn}
    Let $C$ be a $T$-$\infty$-category with finite products. Then a $T$-commutative monoid is a $T$-semiadditive functor $\underline{\FF_*}_T\to C$. We will indicate the $T$-$\infty$-category of $T$-commutative monoids in $C$ with $\underline{\CMon}_T(C)$. Precomposition with the cocartesian section $I(-)_+:T^\op\to \underline{\FF_*}_T$ induces a $T$-functor
    \[\underline{\CMon}_T(C)\to C\,.\]
\end{dfn}

In order to prove the universal property of $\underline{\CMon}_T(C)$ we will need the following lemma
\begin{lem}\label{lem:free-pointed-category-with-coproducts}
    Let $C$ be a pointed $T$-$\infty$-category with finite $T$-coproducts. Then the map
    \[\Fun_T^\amalg(\underline{\FF_*}_T,C)\to C\]
    given by precomposition with $I(-)_+$ is an equivalence
\end{lem}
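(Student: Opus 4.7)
The plan is to construct an explicit inverse. Given an object $X$ of $C$, thought of as a cocartesian section of the structure map $C \to T^\op$ with fibers $X_V \in C_V$, define a $T$-functor $\hat{X}\colon \underline{\FF_*}_T \to C$ by sending an object $[U \to V]_+$ in the fiber over $V$ to
\[\hat{X}([U \to V]_+) = \coprod_{W \in \mathrm{Orbit}(U)} \coprod_{W/V} X_W \in C_V,\]
where $\coprod_{W/V}$ denotes the left $T$-adjoint of $\delta_{W/V}$ furnished by proposition~\ref{prp:B-colims}, and the empty coproduct is interpreted as the zero object of $C_V$ (using that $C$ is pointed). Four things must be checked: (a) $\hat{X}$ extends to a $T$-functor, i.e.\ sends cocartesian edges to cocartesian edges; (b) $\hat{X}$ preserves finite $T$-coproducts; (c) the composite $\hat{X}\circ I(-)_+$ is naturally equivalent to $X$; (d) any $F \in \Fun_T^\amalg(\underline{\FF_*}_T, C)$ is recovered as $\widehat{F\circ I(-)_+}$.

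Step (c) is essentially tautological, since $I(V)_+ = [V \to V]_+$ has $V$ as its unique orbit, so $\hat{X}(I(V)_+) = \coprod_{V/V}X_V = X_V$. Step (d) rests on the observation that every object of $\underline{\FF_*}_T$ admits a canonical decomposition
\[[U \to V]_+ \simeq \coprod_{W \in \mathrm{Orbit}(U)} \coprod_{W/V} I(W)_+\]
as a finite $T$-coproduct in $\underline{\FF_*}_T$: one combines the fiberwise orbit decomposition $U \simeq \coprod_W W$ in $\FF_{T_{/V}}$ with the identification $[W \to V]_+ \simeq \coprod_{W/V}I(W)_+$ (expressing the parametrized coproduct of the tautological ``basepoint section''). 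Applying $F$ and using preservation of finite $T$-coproducts then produces exactly the formula defining $\widehat{F\circ I(-)_+}$. Step (b) follows similarly by combining the additive formula over orbits with the fact that each $\coprod_{W/V}$, being a left adjoint, preserves coproducts.

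The main obstacle is step (a): the pointwise formula must be assembled into a genuine map of cocartesian fibrations over $T^\op$. The most efficient way is to \emph{define} $\hat{X}$ as the left $T$-Kan extension of $X$ along the cocartesian section $I(-)_+\colon T^\op \to \underline{\FF_*}_T$. The Kan extension exists because $C$ has all finite $T$-coproducts, and $T$-functoriality is then automatic. The explicit formula is extracted afterwards: the comma category over $[U \to V]_+$ is controlled, via atomicity of $T$ (which forces the relevant characteristic maps $I(W)_+ \to [U \to V]_+$ to come from honest summand inclusions of orbits), by $\mathrm{Orbit}(U)$, and compatibility with cocartesian edges of $\underline{\FF_*}_T$ over an arrow $V \to V'$ of $T^\op$ reduces to the Beck--Chevalley equivalence
\[\delta_{V'/V}\coprod_{W/V} X_W \xrightarrow{\;\sim\;} \coprod_{o \in \mathrm{Orbit}(W\times_V V')}\coprod_{pr_1}\delta_{pr_2}X_W\]
of proposition~\ref{prp:B-colims}(2). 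Once (a) is in place, (b)--(d) are formal and together establish that $F \mapsto F \circ I(-)_+$ and $X \mapsto \hat{X}$ are mutually inverse equivalences.
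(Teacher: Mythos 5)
Your proposal is correct and takes essentially the same approach as the paper: the paper's one-line proof likewise constructs the inverse by left $T$-Kan extension along the cocartesian section $I(-)_+\colon T^\op\to \underline{\FF_*}_T$. The extra material you supply --- the orbitwise formula $\coprod_{W\in\mathrm{Orbit}(U)}\coprod_{W/V}X_W$, the decomposition $[U\to V]_+\simeq\coprod_{W\in\mathrm{Orbit}(U)}\coprod_{W/V}I(W)_+$, and the Beck--Chevalley and atomicity checks --- is precisely the verification the paper leaves implicit.
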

\begin{proof}
    We can construct an inverse by sending every $c\in C_V$ to the left Kan extension of its cocartesian section $\VV\to C$ along $I(-)_+:T^\op\to \underline{\FF_*}_T$.
\end{proof}

\begin{prp}
    Let $C$ be a $T$-$\infty$-category with finite $T$-products. The functor
    \[\underline{\CMon}_T(C)\to C\,.\]
    induced by precomposition with the cocartesian section $I(-)_+:T\to \FF^T_*$ is an equivalence if and only if $C$ is $T$-semiadditive.
\end{prp}
\begin{proof}
    If the map is an equivalence then $C$ is $T$-semiadditive, since $\underline{\CMon}_T(C)$ is. Vice versa if $C$ is $T$-semiadditive then
    \[\underline{\CMon}_T(C)=\underline{\Fun}^\oplus_T(\underline{\FF_*}_T,C)=\underline{\Fun}^\amalg_T(\underline{\FF_*}_T,C)\to C\]
    is an equivalence by lemma \ref{lem:free-pointed-category-with-coproducts}.
\end{proof}

\begin{cor}\label{cor:universal-property-monoids} 
    Let $C$ be a $T$-$\infty$-category with finite $T$-products and $D$ a pointed $T$-$\infty$-category with finite $T$-coproducts. Then the map
    \[\Fun_T^\amalg(D,\underline{\CMon}_T(C))\to \Fun_T^\oplus(D,C)\]
    is an equivalence of categories.
\end{cor}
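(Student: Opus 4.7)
The plan is to apply the two preceding propositions (that $\underline{\Fun}^\oplus_T(-,-)$ is $T$-semiadditive, and that $\underline{\CMon}_T(E)\to E$ is an equivalence precisely when $E$ is $T$-semiadditive) to the functor category $E\coloneqq\underline{\Fun}^\oplus_T(D,C)$. First, $E$ admits finite $T$-products computed pointwise in $C$ (since pointwise $T$-products of $T$-semiadditive functors are again $T$-semiadditive), and $E$ is $T$-semiadditive by the penultimate proposition. The last proposition therefore furnishes an equivalence
\[\underline{\CMon}_T(E)=\underline{\Fun}^\oplus_T\bigl(\underline{\FF_*}_T,\underline{\Fun}^\oplus_T(D,C)\bigr)\xrightarrow{\sim}\underline{\Fun}^\oplus_T(D,C),\]
induced by precomposition with the cocartesian section $I(-)_+\colon T^\op\to\underline{\FF_*}_T$.

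Next I would identify the left hand side with $\underline{\Fun}^\amalg_T(D,\underline{\CMon}_T(C))$ by currying. Using the cartesian closure of $T$-$\infty$-categories,
\[\underline{\Fun}_T\bigl(\underline{\FF_*}_T,\underline{\Fun}_T(D,C)\bigr)\simeq\underline{\Fun}_T\bigl(\underline{\FF_*}_T\times_{T^\op}D,C\bigr)\simeq\underline{\Fun}_T\bigl(D,\underline{\Fun}_T(\underline{\FF_*}_T,C)\bigr).\]
I claim that both $\underline{\Fun}^\oplus_T(\underline{\FF_*}_T,\underline{\Fun}^\oplus_T(D,C))$ and $\underline{\Fun}^\oplus_T(D,\underline{\Fun}^\oplus_T(\underline{\FF_*}_T,C))$ pick out the same full subcategory of the middle term, namely those two-variable $T$-functors $F\colon\underline{\FF_*}_T\times_{T^\op}D\to C$ such that $F(-,d)$ is $T$-semiadditive for every $d\in D$ \emph{and} $F(J,-)$ is $T$-semiadditive for every $J\in\underline{\FF_*}_T$: the outer $\oplus$ records one of these conditions (using that $T$-products in $\underline{\Fun}^\oplus_T$ are pointwise) and the inner $\oplus$ records the other. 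Since this joint condition is manifestly symmetric in the two variables, the two nested subcategories coincide, and the second one equals $\underline{\Fun}^\oplus_T(D,\underline{\CMon}_T(C))$.

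Finally, since $\underline{\CMon}_T(C)$ is $T$-semiadditive by the penultimate proposition, its $T$-products and $T$-coproducts agree, so a $T$-functor $D\to\underline{\CMon}_T(C)$ preserves finite $T$-coproducts iff it is $T$-semiadditive; thus $\underline{\Fun}^\amalg_T(D,\underline{\CMon}_T(C))=\underline{\Fun}^\oplus_T(D,\underline{\CMon}_T(C))$. Composing the three equivalences produces
\[\underline{\Fun}^\amalg_T(D,\underline{\CMon}_T(C))\xrightarrow{\sim}\underline{\Fun}^\oplus_T(D,C),\]
and tracing the composite through the currying isomorphisms and the evaluation $\underline{\CMon}_T(E)\to E$ identifies it with postcomposition by the evaluation $\underline{\CMon}_T(C)\to C$ at $I(-)_+$. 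Passing to global sections yields the stated equivalence of $\infty$-categories.

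The only non-formal step is the symmetry argument in the second paragraph: rigorously matching the two full subcategories of ``separately $T$-semiadditive'' two-variable functors under the currying equivalence requires unwinding $T$-semiadditivity through the characteristic maps $\chi_{[W\subseteq U]}$ and checking compatibility with the exchange for $T$-(co)products in two-variable functor categories. Conceptually this is evident, and given the framework of $T$-$\infty$-categories already set up in \cite{Exp2} it should be a direct verification; everything else is a bookkeeping chain of the two propositions immediately preceding the corollary.
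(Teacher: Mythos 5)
Your proof is correct and follows essentially the same route as the paper: the paper's argument is exactly the chain $\Fun_T^\amalg(D,\underline{\CMon}_T(C))\cong\Fun_T^\oplus(D,\underline{\CMon}_T(C))\cong\underline{\CMon}_T(\Fun_T^\oplus(D,C))\cong\Fun_T^\oplus(D,C)$, using $T$-semiadditivity of $\underline{\CMon}_T(C)$ for the first step and the two preceding propositions for the last. The only difference is one of exposition: the paper asserts the middle ``swap'' equivalence without comment, whereas you correctly identify it as the one non-formal step and justify it via currying and the symmetry of separately $T$-semiadditive two-variable functors.
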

\begin{proof}
    Observe that
    \[\Fun_T^\amalg(D,\underline{\CMon}_T(C)) \cong \Fun_T^\oplus(D,\underline{\CMon}_T(C))\cong\underline{\CMon}_T(\Fun_T^\oplus(D,C))\]
    where the first equivalence comes from the $T$-semiadditivity of $\underline{\CMon}_T(C)$. Since $\Fun_T^\oplus(D,C)$ is $T$-semiadditive the thesis follows by the previous proposition.
\end{proof}


\section{$T$-commutative monoids and Mackey functors}

\begin{nul}
The notion of $T$-commutative monoid, while being the natural generalization of $\Gamma$-space to the parametrized setting, might seem abstract and difficult to work with. The aim of this section is that in fact $T$-commutative monoids are just objects very familiar in equivariant homotopy theory: Mackey functors. 
\end{nul}

\begin{lem}
    Let $\underline{\FF_*^{in}}_T$ be the $T$-subcategory of $\underline{\FF_*}_T$ containing all objects and all the maps represented by spans in $S_T$
    \[I\ot \tilde I \to I'\]
    where the right arrow is an equivalence. Then a functor $M:\underline{\FF_*}_T\to \Top$ is a $T$-commutative monoid if and only if its restriction to $\underline{\FF_*^{in}}_T$ is a right Kan extension along $I(-)_+\colon T^\op\to \FF^T_*$.
\end{lem}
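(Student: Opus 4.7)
The plan is as follows. I interpret $M$ as a $T$-functor $\underline{\FF_*}_T \to \underline{\Top}_T$ via theorem \ref{thm:univpropDG}, so that $\prod_I M(I(-)_+)$ makes sense as a $T$-product in $\underline{\Top}_T$. The key observation is that both conditions in the lemma -- $T$-semiadditivity and the right Kan extension property -- will reduce, pointwise at each $I_+ \in \underline{\FF_*^{in}}_T$ with $I = [U \to V]$, to the same comparison equivalence
\[
M(I_+) \simeq \prod_I M(I(-)_+)\,.
\]

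The main step is to identify the overcategory $T^{\op} \times_{\underline{\FF_*^{in}}_T} (\underline{\FF_*^{in}}_T)_{I_+/}$ governing the right Kan extension value at $I_+$ with the category of points $\UU$. An object of this overcategory is a pair $(V'', f\colon I_+ \to I(V'')_+)$ where $f$ is inert. Unwinding the definition of morphisms in $\underline{\FF_*}_T$, the datum of such an $f$ is a summand inclusion $V'' \hookrightarrow U \times_V V''$ over some map $V'' \to V$ in $T$. Projecting to $U$ gives a map $V'' \to U$ in $\FF_T$, i.e.\ an object of $\UU$ over $V''$. Conversely, any map $V'' \to U$ induces $V'' \to U \times_V V''$ by the universal property of the pullback, and by atomicity of $T$ this is automatically a summand inclusion: the section of the projection forces the image to land in a single orbit component isomorphic to $V''$. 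A parallel check identifies morphisms.

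Under this identification, the right Kan extension formula becomes $(I(-)_+)_* M_0(I_+) = \lim_{\UU} M \circ I(-)_+$, and by the universal property of $\underline{\Top}_T$ this limit computes the value at $V$ of $\prod_I M(I(-)_+)$. Moreover, the counit map induced by the right Kan extension matches the product of characteristic maps $\chi_{[W \subseteq U]}$: the inert morphisms $I_+ \to I(W)_+$ corresponding to the orbit components of $U$ are, by construction, exactly the characteristic maps themselves.

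The forward direction now falls out immediately: applying $T$-semiadditivity to $I$ and the canonical diagram $X = I(-)_+ \colon \UU \to \underline{\FF_*}_T$, which satisfies $\coprod_I X = I_+$, yields the right Kan extension equivalence. For the reverse direction, I need to upgrade the pointwise identity $M(J_+) \simeq \prod_J M(I(-)_+)$ (holding for all $J \in \FF_{T,V}$) to the general semiadditivity comparison $M(\coprod_I X) \simeq \prod_I M(X)$ for arbitrary $X \colon \UU \to \underline{\FF_*}_T$. Writing each $X_{[W\subseteq U]}$ as $Y_{W,+}$ for some $Y_W \in \FF_{T_{/W}}$, I can express $\coprod_I X$ as $J_+$ for $J = \coprod_{W \in \mathrm{Orbit}(U)} \coprod_{W/V} Y_W$, and associativity of $T$-products rewrites $\prod_J M(I(-)_+) \simeq \prod_I \prod_{Y_W} M(I(-)_+) \simeq \prod_I M(X)$. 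The subtlest part of the argument is verifying that the comparison maps arising from this decomposition agree with those defining $T$-semiadditivity, which amounts to a careful chase of the characteristic maps through the orbit decomposition of $J$.
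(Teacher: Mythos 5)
Your proposal is correct and takes essentially the same approach as the paper: the paper's entire proof is the one-line appeal to the pointwise limit formula for right Kan extensions (\cite[Pr.~4.3.2.15]{HTT}), and your argument---identifying the comma category over $I_+$ with the category of points $\UU$ via atomicity, matching the limit cone with the characteristic maps $\chi_{[W\subseteq U]}$, and reducing both directions to the comparison $M(I_+)\simeq\prod_I M(I(-)_+)$---is precisely the unpacking that ``obvious'' conceals. Nothing in your outline would fail; the converse's ``careful chase'' through the orbit decomposition of $J$ is indeed the only point requiring diligence, and it goes through exactly as you describe.
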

\begin{proof}
    Obvious from the limit description of right Kan extensions (see \cite[Pr.~4.3.2.15]{HTT}).
\end{proof}

\begin{lem}
    The inclusion $j:\underline{\FF_*}_T\to \underline{\AA}^{\eff}(T)$ is a $T$-commutative monoid.
\end{lem}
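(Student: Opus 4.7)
My plan is to reduce the lemma to the statement that $j$ preserves finite $T$-coproducts, and then verify that preservation via the criterion of Proposition~\ref{prp:B-colims}. Since $\underline{\AA}^{\eff}(T)$ is $T$-semiadditive (Example~\ref{exm:Burnside-semiadditive}), the proposition preceding Corollary~\ref{cor:universal-property-monoids} gives $\underline{\CMon}_T(\underline{\AA}^{\eff}(T)) \simeq \underline{\AA}^{\eff}(T)$, and the corollary itself therefore identifies $\Fun^{\oplus}_T(\underline{\FF_*}_T,\underline{\AA}^{\eff}(T))$ with $\Fun^{\amalg}_T(\underline{\FF_*}_T,\underline{\AA}^{\eff}(T))$. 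So a $T$-functor into $\underline{\AA}^{\eff}(T)$ is $T$-semiadditive exactly when it preserves finite $T$-coproducts, and it is enough to establish the latter for $j$.

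Next I would invoke Proposition~\ref{prp:B-colims}. Fiberwise, $j$ restricts over $V\in T$ to the inclusion $(\FF_{T_{/V}})_*\hookrightarrow A^{\eff}(\FF_{T_{/V}})$ (in the identification of the final lemma of Section~\ref{sect:Gfin}); under that identification, wedge sums of pointed objects correspond to disjoint unions of the ``unpointed parts'', and these are also the coproducts in $A^{\eff}$, so $j_V$ preserves fiberwise finite coproducts. For an edge $e\colon V'\to V$ in $T$, I would note that the cocartesian edges of $\underline{\FF_*}_T$ (as a subcategory of $\underline{\AA}^{\eff}(T)$) are precisely those lying over which the left square is cartesian, which trivially satisfies the summand-inclusion condition; hence the pushforwards $\delta_e$ in $\underline{\FF_*}_T$ and $\underline{\AA}^{\eff}(T)$ agree as base-change functors, and by uniqueness of adjoints their left adjoints $\coprod_e$ (both given by post-composition with $e$) agree as well. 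The Beck--Chevalley square is then automatically preserved because $j$ is the identity on objects, so all three conditions of Proposition~\ref{prp:B-colims} are satisfied.

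The main obstacle will be verifying that the resulting composite $j(\coprod_I X)\xrightarrow{\sim}\coprod_I j(X)\xrightarrow{\sim}\prod_I j(X)$ (coproduct-preservation followed by $T$-semiadditivity of the target) really coincides with the canonical map $\prod_W(\chi_{[W\subseteq U]})_*$ of Definition~\ref{dfn:additive-functor}. This is a naturality check: the construction defining $(\chi_{[W\subseteq U]})_*$ uses the Beck--Chevalley splitting $\delta_{W/V}\coprod_I X\simeq X_{[W\to U]}\amalg\coprod_{[\tilde U\to U]}X$, and because $j$ respects both the splitting and the identity/zero maps on its summands, the two descriptions of the canonical map agree; alternatively, one can appeal to the uniqueness of the comparison $\coprod_I\to\prod_I$ in any semiadditive setting. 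Either way, the composite is an equivalence, which completes the argument.
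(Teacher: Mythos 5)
Your proof is correct and takes essentially the same route as the paper: the paper's one-line argument likewise combines the fact that $j$ preserves finite $T$-coproducts (phrased there as $\underline{\FF_*}_T$ containing all $T$-coproduct diagrams of $\underline{\AA}^{\eff}(T)$) with the $T$-semiadditivity of $\underline{\AA}^{\eff}(T)$ from example \ref{exm:Burnside-semiadditive}. Your reduction via corollary \ref{cor:universal-property-monoids}, the fiberwise/Beck--Chevalley verification through proposition \ref{prp:B-colims}, and the final compatibility check simply spell out in detail what the paper declares to be clear.
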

\begin{proof}
    Clear from the fact that $\underline{\FF_*}_T$ contains all $T$-coproduct diagrams of $\underline{\AA}^{\eff}(T)$ and example \ref{exm:Burnside-semiadditive}.
\end{proof}

\begin{lem}\label{lem:Kan-extension-of-monoids}
    Let $C$ be an $\infty$-category and let $E,F,D$ subcategories of $C$ such that $(C,E,D)$ and $(C,E,F)$ are adequate triples in the sense of \cite{M1}. Consider the diagram of categories
    \begin{equation*}
        \begin{tikzcd}
            A^{\eff}(C,\iota E,D) \ar{r}{f}\ar{d}{g'} & A^{\eff}(C,E,D)\ar{d}{g}\\
            A^{\eff}(C,\iota E,F) \ar{r}{f'} & A^{\eff}(C,E,F)\\
        \end{tikzcd}
    \end{equation*}
    Then if a right Kan extension along $g'$ exists so does the right Kan extension along $g'$ and the natural transformation
    \[{f'}^* g_* \to g'_* f^*\]
    is an equivalence.
\end{lem}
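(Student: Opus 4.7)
The plan is to use the pointwise formula for right Kan extensions \cite[Pr.~4.3.2.15]{HTT} to describe both sides of the asserted equivalence as limits over comma $\infty$-categories, and to reduce the claim to the assertion that a natural comparison functor between these is initial (i.e.\ right cofinal for limits).

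Since $g_* H$ exists by hypothesis, the pointwise formula gives, for any $b\in A^{\eff}(C,E,F)$,
\[(g_* H)(b)\;\simeq\;\lim_{(a,\sigma)\in b\downarrow g} H(a)\,,\]
where objects of $b\downarrow g$ consist of $a\in A^{\eff}(C,E,D)$ together with a morphism $\sigma:b\to a$ in $A^{\eff}(C,E,F)$, which by the omnibus theorem \cite[Th.~12.2]{M1} is represented by a span $[b\xleftarrow{e}w\xrightarrow{p}a]$ with $e\in E$ and $p\in F$. Evaluating at $f'(b')$ computes $(f'^*g_* H)(b')$. The commutative square of the lemma induces a canonical functor
\[\Phi\colon\ b'\downarrow g'\longrightarrow f'(b')\downarrow g\,,\qquad (a',\tau)\longmapsto (f(a'),f'(\tau))\,,\]
and the Beck--Chevalley transformation evaluated at $b'$ is precisely the map of limits induced by $\Phi$. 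Thus by \cite[Pr.~4.1.1.8]{HTT} it suffices to show that $\Phi$ is initial; initiality then yields simultaneously the existence of $g'_* f^* H$ via the pointwise formula and the desired equivalence.

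To verify initiality, fix $(a,\sigma)\in f'(b')\downarrow g$ with $\sigma=[b'\xleftarrow{e}w\xrightarrow{p}a]$, and study the comma $\infty$-category $(a,\sigma)\downarrow\Phi$. Unwinding morphisms via the omnibus theorem, an object consists of an $a'$, a morphism $\tau:b'\to a'$ in $F$, and a span $\alpha=[a\xleftarrow{e'}u\xrightarrow{p'}a']$ with $e'\in E$ and $p'\in D$, subject to the requirement that the composite $\alpha\circ\sigma$ be equivalent to $f'(\tau)$; concretely, this amounts to requiring that $w\times_a u$ be canonically equivalent to $b'$ in a manner compatible with both legs. The main obstacle is to show that this category is weakly contractible. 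My plan is to produce an initial object by taking $a'$ to be the apex $w$ of $\sigma$, choosing $\tau$ and $\alpha$ as the canonical factorizations of $\sigma$ dictated by writing it as the composite of the span with identity right leg followed by the span with identity left leg in $A^{\eff}(C,E,F)$, and then verifying that any other object receives a unique morphism from this one by a pullback argument. Carrying this out rigorously is the only substantive technical point and relies on the adequacy of the triples together with stability of the relevant classes of morphisms under pullback; once it is in place, the result follows formally.
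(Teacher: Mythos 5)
Your overall skeleton coincides with the paper's: the paper likewise reduces the lemma to showing that the induced functor between comma categories, namely $A^{\eff}(C,\iota E,D)\times_{A^{\eff}(C,\iota E,F)} A^{\eff}(C,\iota E,F)_{I/}\to A^{\eff}(C,E,D)\times_{A^{\eff}(C,E,F)} A^{\eff}(C,E,F)_{I/}$ (your $\Phi$), is coinitial, and then checks weak contractibility of the relevant comma categories. But there are two problems in your execution. First, a direction problem: to prove $\Phi$ initial for purposes of limits you must check contractibility of the \emph{slices} $\Phi\downarrow(a,\sigma)$, i.e.\ objects $(a',\tau)$ of $b'\downarrow g'$ equipped with a morphism $\Phi(a',\tau)\to(a,\sigma)$; the coslice $(a,\sigma)\downarrow\Phi$ that you unwind is the criterion for \emph{cofinality} (colimits). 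Moreover, in the conventions of \cite{M1} used by the paper, a morphism of $A^{\eff}(C,E,F)$ is a span whose \emph{backward} leg lies in $F$ and whose forward leg lies in $E$ (the paper's own proof decorates the span as $I\xleftarrow{F}\tilde J\xrightarrow{E}J$); your decorations are reversed. These two reversals happen to cancel --- the category you describe is the opposite of the correct slice, and weak contractibility is invariant under passing to opposites --- but nothing in your writeup registers this, and as literally stated neither the criterion you invoke nor your span decorations match the categories in the statement. Relatedly, the compatibility condition is not that the apex $w\times_a u$ is equivalent to $b'$; it is that the comparison map to the apex of the given span is an equivalence (in the paper's notation, that $\tilde J\to\tilde T$ is an equivalence), which holds because the middle square of the composition diagram is cartesian and the forward leg of a morphism of $A^{\eff}(C,\iota E,F)$ is an equivalence.

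Second, and decisively, you defer exactly the point that constitutes the proof: the contractibility of the comma category is announced as a ``plan,'' and the plan --- exhibit a candidate initial object and check that every other object ``receives a unique morphism'' --- is not adequate in the $\infty$-categorical setting, where one must control mapping spaces (equivalently, all higher simplices of the comma category), not just objects and arrows. The paper closes this gap by using the equivalence noted above to collapse the data: the comma category is identified (with the identification verified on higher simplices as well) with the opposite of the category of factorizations $\tilde J\xrightarrow{D}T\xrightarrow{F}I$ of the backward leg of the fixed span, in which the trivial factorization $\tilde J=\tilde J\to I$ is terminal, whence weak contractibility. Your proposed distinguished object --- taking $a'$ to be the apex $w$ of $\sigma$ with the canonical two-step decomposition of $\sigma$ --- is precisely this terminal object under the identification, so your instinct is correct; but without the factorization-category identification (or some substitute argument establishing contractibility of the relevant mapping spaces), the proposal stops short of a proof of the only substantive claim in the lemma.
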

\begin{proof}
    Let us fix $I\in C$. We need to prove that the functor
    \begin{equation*}
        A^{\eff}(C,\iota E,D)\times_{A^{\eff}(C,\iota E,F)} A^{\eff}(C,\iota E,F)_{I/} \to A^{\eff}(C,E,D)\times_{A^{\eff}(C,E,F)} A^{\eff}(C,E,F)_{I/}
    \end{equation*}
    is coinitial. This is equivalent to the fact that for every $J\in C$ with a map from $I$ the category
    \begin{multline*}
        \left(A^{\eff}(C,\iota E,D)\times_{A^{\eff}(C,\iota E,F)} A^{\eff}(C,\iota E,F)_{I/}\right)\times_{\left(A^{\eff}(C,E,D)\times_{A^{\eff}(C,E,F)} A^{\eff}(C,E,F)_{I/}\right)}\\
        \left(A^{\eff}(C,E,D)\times_{A^{\eff}(C,E,F)} A^{\eff}(C,E,F)_{I/}\right)_{/J}
    \end{multline*}
    is weakly contractible. Let us start naming names. We have a fixed map $I\to J$ in $A^{\eff}(C,E,F)$. This correspond to a span
    \begin{equation*}
        I \xleftarrow{F} \tilde J \xrightarrow{E} J\,,
    \end{equation*}
    where we decorate every arrow with the subcategory it lives in. Now an object of our category is $T\in T$ together with an arrow in $A^{\eff}(C,\iota E,F)$ from $I$ and an arrow in $A^{\eff}(C, E, D)$ to $J$. These correspond to spans
    \begin{equation*}
        I \xleftarrow{F} T' \xrightarrow{\iota E} T
        \qquad\textrm{ and }\qquad
        T\xleftarrow{D} \tilde T \xrightarrow{E} J\,.
    \end{equation*}
    The last piece of data needed is an homotopy of their composition with the given map $I\to J$, that is a diagram
    \begin{equation*}
        \begin{tikzcd}
            & & J\\
            &\tilde J \ar{ur}{E}\ar{r}\ar{d}\ar{ld}{F} & \tilde T\ar{d}{D}\ar{u}{E}\\
            I&T' \ar{r}{\iota E} \ar{l}{F}& T\\
    \end{tikzcd}
    \end{equation*}
    where the central square is cartesian. But this is equivalent to the map $\tilde J\to \tilde T$ being an equivalence. Summing up, an object of our category is a factorization of $\tilde J\to I$
    \begin{equation*}
        \tilde J \xrightarrow{D} T\xrightarrow{F} I\,.
    \end{equation*}
    Moreover a similar analysis on higher simplices shows that this is the opposite of the category of factorizations. It is easy to see that $\tilde J \xrightarrow{=} \tilde J \to I$ is a terminal object for this category, which is then weakly contractible.
\end{proof}

\begin{thm}
    Let $C$ be a $T$-$\infty$-category with finite $T$-limits. Precomposition with the inclusion $j:\underline{\FF_*}_T\to \underline{\AA}^{\eff}(T)$ induces an equivalence
    \[\underline{\Fun}^\times_T(\underline{\AA}^{\eff}(T),C)\to \underline{\CMon}_T(C)\,.\]
    We denote the category on the left hand side by $\underline{\Mack}^T(C)$ and call it the \emph{$T$-$\infty$-category of $T$-Mackey functors valued in $C$}.
\end{thm}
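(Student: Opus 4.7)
The plan is to construct an inverse $\Phi\colon\underline{\CMon}_T(C)\to\underline{\Mack}^T(C)$ by right Kan extension along $j$, using the preceding characterization of $T$-commutative monoids (as functors whose restriction to $\underline{\FF_*^{in}}_T$ is right Kan extended from $T^\op$) together with the compatibility provided by Lemma~\ref{lem:Kan-extension-of-monoids}.

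First I would check that $j^*$ lands in $\underline{\CMon}_T(C)$: since $\underline{\AA}^{\eff}(T)$ is $T$-semiadditive (Example~\ref{exm:Burnside-semiadditive}), and $j$ carries $T$-coproduct diagrams of finite $T$-sets to the same diagrams in $\underline{\AA}^{\eff}(T)$ (where they coincide with $T$-products), any $T$-product-preserving $N$ satisfies $N(j(\coprod_I X))\simeq \prod_I N(j(X))$, which is the $T$-semiadditivity of $N\circ j$.

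For the inverse, set $\Phi(M)=j_*M$. The crucial point is that $j_*M$ preserves $T$-products. Let $\underline{\AA}^{\eff,\mathrm{in}}(T)$ denote the $T$-subcategory of $\underline{\AA}^{\eff}(T)$ of spans $I\leftarrow\tilde I\to I'$ whose right leg is an equivalence, and consider the square of $T$-subcategories
\[\begin{tikzcd}
\underline{\FF_*^{in}}_T \ar[r, hook]\ar[d, hook] & \underline{\FF_*}_T \ar[d, hook, "j"] \\
\underline{\AA}^{\eff,\mathrm{in}}(T) \ar[r, hook] & \underline{\AA}^{\eff}(T)
\end{tikzcd}\]
Both rows extend the right-leg restriction from equivalences to all morphisms, while both columns extend the left-leg restriction from summand inclusions to all morphisms, so this square fits the hypotheses of Lemma~\ref{lem:Kan-extension-of-monoids} for appropriate adequate triples. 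The lemma identifies $(j_*M)|_{\underline{\AA}^{\eff,\mathrm{in}}(T)}$ with the right Kan extension of $M|_{\underline{\FF_*^{in}}_T}$ along the left vertical; by the preceding characterization of $T$-commutative monoids this in turn equals the right Kan extension of $M|_{T^\op}$ along the diagonal $T^\op\hookrightarrow\underline{\AA}^{\eff,\mathrm{in}}(T)$. Since $\underline{\AA}^{\eff}(T)$ is $T$-semiadditive and the projection maps witnessing its $T$-products all lie in $\underline{\AA}^{\eff,\mathrm{in}}(T)$, this Kan extension condition is exactly $T$-product preservation of $j_*M$.

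Finally, $j^*j_*M\simeq M$ holds because $j$ is fully faithful. Conversely, for $N\in\underline{\Mack}^T(C)$, both $N$ and $j_*j^*N$ are $T$-product preserving and agree on $T^\op$ via $k=j\circ i$; the analysis above shows that $T$-product preserving functors on $\underline{\AA}^{\eff}(T)$ are determined by their restriction to $T^\op$, forcing $j_*j^*N\simeq N$. The main obstacle I anticipate is the precise identification of $\underline{\AA}^{\eff,\mathrm{in}}(T)$ as an effective Burnside $\infty$-category of an adequate triple to which Lemma~\ref{lem:Kan-extension-of-monoids} applies verbatim, together with the verification that the ``right Kan extended from $T^\op$'' condition on $\underline{\AA}^{\eff,\mathrm{in}}(T)$ genuinely characterizes $T$-product preservation on all of $\underline{\AA}^{\eff}(T)$.
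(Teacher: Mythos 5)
Your overall strategy is the paper's: invert $j^*$ by right Kan extension along $j$, and compute the restriction of $j_*M$ to the wide subcategory of spans with invertible right leg via Lemma~\ref{lem:Kan-extension-of-monoids} together with the characterization of $T$-commutative monoids through $\underline{\FF_*^{in}}_T$ (your square is exactly the paper's instance $C=F=S_T$, $E$ the fiberwise arrows, $D$ the summand inclusions). But two of your closing steps rest on false claims. First, $j$ is \emph{not} fully faithful: it contains all objects but omits every span whose left leg fails to be a summand inclusion, so mapping spaces in $\underline{\AA}^{\eff}(T)$ are strictly larger than in $\underline{\FF_*}_T$. Hence ``$j^*j_*M\simeq M$ because $j$ is fully faithful'' is unjustified, and this counit equivalence is precisely the nontrivial step: the paper's whole application of Lemma~\ref{lem:Kan-extension-of-monoids} is devoted to it, restricting the counit to the wide subcategory $\underline{\FF_*^{in}}_T$ (legitimate, since equivalences of natural transformations are detected objectwise and a wide subcategory contains all objects) and using that $M|_{\underline{\FF_*^{in}}_T}$ is right Kan extended from $T^\op$. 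Your Kan-extension identification does contain the needed input --- it shows the counit is an equivalence on orbits, and a map of $T$-semiadditive functors that is an equivalence on $T^\op$ is an equivalence --- but that argument has to be made; full faithfulness cannot be cited. Second, ``$T$-product preserving functors on $\underline{\AA}^{\eff}(T)$ are determined by their restriction to $T^\op$'' is false: the spans with noninvertible left leg encode transfers, which are genuine extra structure (already for $T=\OO_G$, discrete Mackey functors are not determined by their underlying coefficient systems). What is true, and what you need, is that a \emph{natural transformation} between two product-preserving functors is an equivalence if and only if it is one on $T^\op$; this must be applied to the unit $N\to j_*j^*N$, which you never exhibit --- two functors merely ``agreeing on $T^\op$'' need not be equivalent. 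The paper handles this direction more cleanly: since $j$ hits every object, the unit is an equivalence if and only if it is after precomposition with $j$, which reduces it to the counit case already established.

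There is also a gap you do not flag at all: $C$ is only assumed to have \emph{finite} $T$-limits, while the pointwise formula for $j_*M$ requires limits over comma categories that are not a priori finite, so the right Kan extension you take need not exist in $C$. This is exactly why the paper's proof opens with a reduction you omit: embed $C$ via the $T$-Yoneda embedding into $\underline{\mathrm{Psh}}_T(C)$ and use a pullback square of functor categories to reduce the whole theorem to $C=\underline{\Top}_T$, where all limits exist. One could instead argue directly that the relevant comma limits reduce to finite $T$-products by a coinitiality analysis, but that is an additional argument, not a formality. Your remaining flagged concern --- that ``right Kan extended from $T^\op$'' on the spans-with-invertible-right-leg subcategory characterizes $T$-product preservation --- is real but of the same ``limit description of right Kan extensions'' nature as the paper's earlier lemma on $\underline{\FF_*^{in}}_T$, and your observation that the product projections lie in that subcategory is the correct reason it works.
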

\begin{proof}
    Let $\underline{\mathrm{Psh}}_T(C)$ be the $T$-presheaf $T$-$\infty$-category of $C$. Then, by the full faithfulness of the $T$-Yoneda embedding (see \cite[Th.~10.4]{Exp1}) we have a pullback square
    \begin{equation*}
    \begin{tikzpicture}
    \matrix(m)[matrix of math nodes,
    row sep=4ex, column sep=4ex,
    text height=1.5ex, text depth=0.25ex]{
    \underline{\Fun}^\times_T(\underline{\AA}^{\eff}(T),C)& \underline{\CMon}_T(C)\\
    \underline{\Fun}^\times_T(\underline{\AA}^{\eff}(T),\mathrm{Psh}_T(C))& \underline{\CMon}_T(\underline{\mathrm{Psh}}_T{C})\\
    \underline{\Fun}_T\left(C^{\vee\op},\underline{\Fun}^\times_T(\underline{\AA}^{\eff}(T),\underline{\Top}_T)\right) & \underline{\Fun}_T\left(C^{\vee\op},\underline{\CMon}_T(\underline{\Top}_T)\right)\\
    };
    \path[>=stealth,->,font=\scriptsize]
    (m-1-1) edge (m-1-2)
            edge (m-2-1)
    (m-1-2) edge (m-2-2)
    (m-2-1) edge (m-2-2)
            edge[-,double]  (m-3-1)
    (m-2-2) edge[-,double]  (m-3-2)
    (m-3-1) edge (m-3-2);
    \end{tikzpicture}\,,
    \end{equation*}
    where $C^{\vee\op}$ is the fiberwise opposite of \cite{BGN}, so it is enough to show the thesis for $C=\underline{\Top}_T$.
    
    We claim that sending every $T$-commutative monoid to its right Kan extension is the inverse of the restriction map. The first step is showing that the natural map
    \[j_*M\circ j \to M\]
    is an equivalence. By applying \ref{lem:Kan-extension-of-monoids} with $C=F=S_T$, $E$ the subcategory of fiberwise arrows and $D$ the category of summand inclusions (that is the egressive maps in the definition of $\underline{\FF_*}_T$) we see that it is enough to prove that the map
    \[k_*(M|_{\underline{\FF_*^{in}}_T})\circ k \to M|_{\underline{\FF_*^{in}}_T}\]
    is an equivalence, where $k$ is the inclusion of $E^\op$ in $F^\op$. But this follows immediately from the fact that $M|_{\underline{\FF_*^{in}}_T}$ is the right Kan extension of its restriction to $T^\op$.
    
    Hence $j_*M$ must be a product preserving functor (since the image of $j$ contains all product diagrams in $\underline{\AA}^{\eff}(T)$. Viceversa let suppose that $N$ is a product preserving functor from $\underline{\AA}^{\eff}(T)$ to $C$. Then there is a natural map
    \[N\to j_*(N\circ j)\,.\]
    But since $j$ is essentially surjective we can check that this is an equivalence after precomposing with $j$, which follows immediately from the previous case.
\end{proof}

\begin{nul}
    With a similar proof it is possible to prove that if $D$ is an $\infty$-category with finite products there is an equivalence
    \begin{equation*}
        \Fun^\times(A^{\eff}(T),D)\cong \CMon_T(D^T)\,.
    \end{equation*}
\end{nul}


    \section{$T$-linear functors and $T$-stability}
    
    Recall the definition of fiberwise linear functor and fiberwise stable cocartesian fibration from section \ref{sec:relative-stability}.
    
    The following definition is inspired to hypothesis (A) of \cite{MR2286026}.
    \begin{dfn}
        Let $C$ be a pointed $T$-$\infty$-category with finite $T$-colimits and let $D$ be a $T$-$\infty$-category with finite $T$-limits. Then a $T$-functor $F:\fromto{C}{D}$ is $T$-linear if it is fiberwise linear and $T$-semiadditive. A $T$-$\infty$-category with all finite $T$-limits and $T$-colimits is $T$-stable if it is fiberwise stable and $T$-semiadditive.
        
        We will denote the $T$-subcategory of $\underline{\Fun}_T(C,D)$ which on the fiber above $V$ is spanned by $T_{/V}$-linear functors from $C\times_{T^\op}(T_{/V})^\op$ to $D\times_{T^\op}(T_{/V})^\op$ with $\underline{\Lin}^T(C,D)$.
    \end{dfn}
    
    \begin{lem}
        Let $D$ be a $T$-semiadditive $T$-$\infty$-category. Then $\underline{\Sp}_T(D)$ is $T$-semiadditive (and hence $T$-stable) and the functor $\Omega^\infty:\underline{\Sp}_T(D)\to D$ preserves $T$-products (and so all $T$-limits).
    \end{lem}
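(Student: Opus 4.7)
The plan is to reduce $T$-semiadditivity of $\underline{\Sp}_T(D)$ to the criterion in Prp.~\ref{prp:B-colims} together with a direct comparison of the indexed coproduct and product. Fiberwise stability (and in particular fiberwise semiadditivity) of $\underline{\Sp}_T(D)$ is already established by Prp.~\ref{prp:fiberwise-spectra}, so the task is to produce left and right adjoints of the pushforwards between fibers, verify Beck--Chevalley, and identify the two adjoints.

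First I would exploit that, since $D$ is $T$-semiadditive, every pushforward $\delta_e\colon D_{b'}\to D_b$ already has \emph{both} adjoints $\coprod_e\dashv \delta_e\dashv\prod_e$, which satisfy the Beck--Chevalley conditions of Prp.~\ref{prp:B-colims}, and by the hypothesis of $T$-semiadditivity the canonical map $\coprod_e X\to \prod_e X$ is an equivalence. In particular $\delta_e$ is both a left and a right adjoint, hence preserves finite limits and colimits, and the same holds for $\coprod_e\simeq\prod_e$. Now the pushforward $\delta_e^{\Sp}\colon \Sp(D_{b'})\to \Sp(D_b)$ in $\underline{\Sp}_T(D)$ is postcomposition with $\delta_e$; I would show its left (resp.\ right) adjoint is postcomposition with $\coprod_e$ (resp.\ $\prod_e$). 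Both of these preserve linear functors because $\coprod_e\simeq\prod_e$ is a right adjoint and hence preserves finite limits; Beck--Chevalley for the lifted adjoints reduces pointwise to Beck--Chevalley in $D$.

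With the adjoints in hand, Prp.~\ref{prp:B-colims} gives that $\underline{\Sp}_T(D)$ has all finite $T$-products and $T$-coproducts, and since both are computed by postcomposition with the same functor $\coprod_e\simeq\prod_e$ (together with the fiberwise (co)product, which coincide as $\Sp(D_b)$ is stable), the canonical comparison $\coprod_I X\to\prod_I X$ of \eqref{eqn:additive-category} is an equivalence. Thus $\underline{\Sp}_T(D)$ is $T$-semiadditive, and combined with fiberwise stability it is $T$-stable.

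Finally, for $\Omega^\infty\colon \underline{\Sp}_T(D)\to D$: Prp.~\ref{prp:fiberwise-spectra} already gives fiberwise left exactness, so only preservation of indexed products remains. By the previous paragraph $\prod_e$ in $\underline{\Sp}_T(D)$ is postcomposition with $\prod_e$ in $D$, so for $F\in\Sp(D_b)$ we get $\Omega^\infty(\prod_e F)=(\prod_e\circ F)(S^0)=\prod_e F(S^0)=\prod_e\Omega^\infty F$, as desired. The only mildly subtle point in the whole argument is checking that postcomposition with $\prod_e$ really produces the right adjoint of $\delta_e^{\Sp}$ at the level of the spectrum objects (as opposed to merely of presheaves on $\Top^{\mathrm{fin}}_*$), but this is immediate once we know $\prod_e\simeq\coprod_e$ preserves finite limits, so that linear functors are carried to linear functors.
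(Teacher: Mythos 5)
Your proof is correct and takes essentially the same route as the paper's, which likewise reduces everything to the observation that the pushforwards on $\underline{\Sp}_T(D)$ are postcomposition with those of $D$, so that the coinciding adjoints $\coprod_e\simeq\prod_e$ of $D$ lift by postcomposition and Beck--Chevalley is checked pointwise. In fact your write-up is more careful than the paper's two-sentence argument, notably in verifying that postcomposition with $\prod_e$ preserves linear functors (so the lifted adjunction restricts to spectrum objects) and in spelling out the evaluation-at-$S^0$ argument for $\Omega^\infty$, both of which the paper leaves implicit.
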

    \begin{proof}
        Recall that $\underline{\Sp}_T(D)$ is the cocartesian fibration classified by the functor $V\mapsto \Sp(D_V)$ so it is clearly fiberwise semiadditive and we just need to show that for every arrow $W\to V$ in $T$ the pushforward functor
        \[\Sp(D_V)\to \Sp(D_W)\]
        has a coinciding left and right adjoints that satisfies the Beck-Chevalley condition. But since the left and right adjoint are clearly given by postcomposition of those of $D_V\to D_W$ the thesis follows.
    \end{proof}
    
    \begin{dfn}
        Let $D$ be a $T$-$\infty$-category with all finite $T$-limits. Then the $T$-$\infty$-category of $T$-spectra is
        \[\underline{\Sp}^T(D)=\underline{\Sp}_T(\underline{\CMon}_T(D))\,.\]
        By the previous lemma the latter category is $T$-stable
    \end{dfn}
    
    Note that there is a natural $T$-functor $\Omega^\infty:\fromto{\underline{\Sp}^T(D)}{D}$ given by the composition
    \[\underline{\Sp}^T(D)=\underline{\Sp}_T(\underline{\CMon}_T(D))\xrightarrow{\Omega^\infty}\underline{\CMon}_T(D)\xrightarrow{I(-)_+^*} D\,.\]
    It is immediate by the previous lemma that it preserves all $T$-limits.
    
    \begin{thm}[Universal property of $T$-spectra]\label{thm:universal-property-B-spectra}
        Let $C$ be a pointed $T$-$\infty$-category with finite $T$-colimits and $D$ be a $T$-$\infty$-category with finite $T$-limits. Then the functor
        \[(\Omega^\infty)_*\colon\fromto{\underline{\Fun}_T^{T-\mathrm{rex}}(C,\underline{\Sp}^T(D))}{\underline{\Lin}^T(C,D)}\]
        is an equivalence of $T$-$\infty$-categories, where the source categories is the full subcategory of those functors preserving finite $T$-limits. In particular
        \[\underline{\Sp}^T(D)\cong \underline{\Lin}_T(\underline{\Top_*}^{fin}_T,D)\,,\]
        and the functor $\Omega^\infty$ is given by evaluation at the cocartesian section $I(-)_+:T^\op\to \underline{\FF_*}_T$.
    \end{thm}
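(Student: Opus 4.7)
The plan is to chain together the two universal properties already proved in the paper: fiberwise spectra (Proposition \ref{prp:fiberwise-spectra}) applied to the target $\underline{\CMon}_T(D)$, and $T$-commutative monoids (Corollary \ref{cor:universal-property-monoids}). Since $\underline{\Sp}^T(D) = \underline{\Sp}_T(\underline{\CMon}_T(D))$ by definition, the first of these gives
\[\underline{\Fun}_T^{fb-\mathrm{rex}}(C, \underline{\Sp}^T(D)) \simeq \underline{\Lin}_T(C, \underline{\CMon}_T(D)),\]
implemented by postcomposition with the fiberwise $\Omega^\infty_T \colon \underline{\Sp}^T(D) \to \underline{\CMon}_T(D)$.

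Next I would identify the additional condition to impose on each side in order to obtain the theorem. On the left, a fiberwise right-exact $T$-functor is $T$-right-exact if and only if it further preserves finite $T$-coproducts (the third bullet of Proposition \ref{prp:B-colims}). On the right, $\underline{\CMon}_T(D)$ is itself $T$-semiadditive, so a fiberwise linear functor into it is $T$-semiadditive if and only if it preserves finite $T$-coproducts. The core compatibility to check is that under the above equivalence, ``$F$ preserves finite $T$-coproducts'' corresponds to ``$\Omega^\infty_T F$ is $T$-semiadditive.'' One direction is immediate: by the lemma preceding the theorem, $\underline{\Sp}^T(D)$ is $T$-stable, so $T$-coproducts there agree with $T$-products; since $\Omega^\infty_T$ preserves $T$-products (being a fiberwise right adjoint), $\Omega^\infty_T F$ preserves $T$-products, i.e.\ is $T$-semiadditive. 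For the converse I would apply $\Omega^\infty_T$ to the comparison map $F(\coprod_I X) \to \prod_I F(X)$, observe that it becomes exactly the $T$-semiadditivity comparison for $\Omega^\infty_T F$ (which is an equivalence by hypothesis), and then conclude by the fiberwise conservativity of the classical $\Omega^\infty \colon \Sp(E) \to E$ applied on each fiber $V \in T$ with $E = \underline{\CMon}_T(D)_V$.

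Having established the refined equivalence
\[\underline{\Fun}_T^{T-\mathrm{rex}}(C, \underline{\Sp}^T(D)) \simeq \underline{\Lin}_T(C, \underline{\CMon}_T(D)) \cap \underline{\Fun}_T^{\amalg}(C, \underline{\CMon}_T(D)),\]
I would then invoke Corollary \ref{cor:universal-property-monoids}, which identifies $\underline{\Fun}_T^{\amalg}(C, \underline{\CMon}_T(D))$ with $\underline{\Fun}_T^{\oplus}(C, D)$ via postcomposition with $I(-)_+^* \colon \underline{\CMon}_T(D) \to D$. This forgetful functor is a fiberwise right adjoint, so it preserves and reflects zero objects and pullbacks, and fiberwise linearity is thus preserved and reflected by the equivalence. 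Restricting both sides to fiberwise linear functors gives the desired identification with $\underline{\Lin}^T(C, D)$. The entire argument is compatible with the $T$-structure (replacing $T$ by $T_{/V}$ throughout), so the equivalence holds as $T$-$\infty$-categories, and the concrete description of $\Omega^\infty$ as evaluation at the cocartesian section $I(-)_+$ drops out of unwinding the two postcompositions in the chain.

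The main obstacle is the compatibility verification in the middle step, namely the correspondence of $T$-coproduct preservation across the fiberwise-spectra equivalence. Both implications combine two distinct properties of $\Omega^\infty_T$ --- preservation of $T$-limits and fiberwise conservativity of the classical $\Omega^\infty$ --- and one must keep careful track of which version of the comparison map is being considered on each side. Everything else is essentially bookkeeping on top of the two pre-existing universal properties.
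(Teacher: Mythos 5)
Your overall route is the same as the paper's: the proof in the text reduces to a fiberwise check and chains Proposition \ref{prp:fiberwise-spectra} (applied with target $\underline{\CMon}_T(D)$) with Corollary \ref{cor:universal-property-monoids}, exactly as you do; the paper simply asserts the middle identification $\Fun^{T-\mathrm{rex}}_T(C,\underline{\Sp}_T(\underline{\CMon}_T(D)))\simeq\Lin^\amalg_T(C,\CMon_T(D))$ without spelling out how the $T$-coproduct condition transfers across the stabilization equivalence. Your attempt to fill in that step, however, contains a genuine error: the classical $\Omega^\infty\colon\Sp(E)\to E$ is \emph{not} conservative, so the converse direction of your compatibility check fails as written. $\Omega^\infty$ is blind to coconnective phenomena; already for $E=\Top$ the map $0\to\Sigma^{-1}H\mathbb{Z}$ induces an equivalence after applying $\Omega^\infty$ but is not an equivalence of spectra. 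You cannot conclude that the comparison map $F(\coprod_I X)\to\prod_I F(X)$ in $\underline{\Sp}^T(D)$ is an equivalence merely because its image under $\Omega^\infty_T$ is one.

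The step is repairable by the standard substitute: the family of evaluations $\{\mathrm{ev}_{S^n}\}_{n\ge 0}$ is jointly conservative on linear functors $\Top^{fin}_*\to E$, and since the lifted $F$ is reduced and preserves finite fiberwise colimits into a fiberwise stable target, it commutes with suspension, so $\mathrm{ev}_{S^n}\circ F\simeq(\Omega^\infty_T F)\circ\Sigma^n_C$. Because $\Sigma^n_C$ preserves finite $T$-coproducts and (as the paper notes right after Definition \ref{dfn:additive-functor}) a $T$-semiadditive functor precomposed with a $T$-coproduct-preserving one is again $T$-semiadditive, applying $\mathrm{ev}_{S^n}$ to your comparison map yields the semiadditivity comparison for $(\Omega^\infty_T F)\circ\Sigma^n_C$, which is an equivalence by hypothesis; joint conservativity then finishes the argument. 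A smaller instance of the same slip occurs at the end: you justify that $I(-)_+^*\colon\underline{\CMon}_T(D)\to D$ reflects zero objects and pullbacks ``being a fiberwise right adjoint,'' but right adjoints preserve limits and do not in general reflect them. What you actually need, and what does hold, is that this forgetful functor is conservative: every object of $\underline{\FF_*}_T$ is a finite $T$-product of objects of the form $I(W)_+$, so a map of $T$-commutative monoids is determined, objectwise, by its restriction along the section $I(-)_+$. With these two corrections your write-up becomes a complete version of the paper's (terser) argument.
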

    \begin{proof}
        Since the map is clearly a $T$-functor we just need to check that it is an equivalence fiberwise. But, remembering that finite $T$-colimits are generated by $T$-coproducts and finite fiberwise colimits, we can apply \ref{cor:universal-property-monoids} and \ref{prp:fiberwise-spectra} and conclude
        \[\Fun^{T-\mathrm{rex}}_T(C,\underline{\Sp}_T(\underline{\CMon}_T(D))) = \Lin^\amalg_T(C,\CMon_T(D)) = \Lin^T(C,D)\,.\]
    \end{proof}

\begin{cor}
    Let $D$ be an $\infty$-category with all finite limits. Then there is an equivalence
    \[\Lin^T(\underline{\Top_*}^{fin}_T,\underline{D}_T) \cong \Fun^\oplus (A^{eff}(T),\Sp(D))\,.\]
\end{cor}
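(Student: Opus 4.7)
The plan is to combine theorem \ref{thm:universal-property-B-spectra} with the identification of $T$-commutative monoids in $\underline{D}_T$ as product-preserving functors on $A^{\eff}(T)$, and then to commute stabilization past the product-preservation condition.

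First, I apply theorem \ref{thm:universal-property-B-spectra} with source $\underline{\Top_*}^{fin}_T$ and target $\underline{D}_T$. This is legitimate because $\underline{D}_T$ has all finite $T$-limits: $D$ has finite limits so $\underline{D}_T$ is fiberwise complete, and the Beck--Chevalley conditions of proposition \ref{prp:B-colims} are automatic for the constant family. The theorem produces an equivalence of $T$-$\infty$-categories
\[\underline{\Lin}^T(\underline{\Top_*}^{fin}_T, \underline{D}_T) \simeq \underline{\Sp}^T(\underline{D}_T) = \underline{\Sp}_T(\underline{\CMon}_T(\underline{D}_T)).\]
Passing to underlying $\infty$-categories of $T$-cocartesian sections yields $\Lin^T(\underline{\Top_*}^{fin}_T, \underline{D}_T) \simeq \Sp(\CMon_T(\underline{D}_T))$, using that cocartesian sections of $\underline{\Sp}_T(E)$ compute $\Sp$ of the cocartesian sections of $E$ (immediate from the construction, since the fiber over $V$ is $\Sp(E_V)$ and stabilization commutes with the limit defining cocartesian sections).

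Second, the unnumbered remark immediately following the theorem comparing Mackey functors and $T$-commutative monoids gives $\CMon_T(\underline{D}_T) \simeq \Fun^{\times}(A^{\eff}(T), D)$. This reduces the claim to the equivalence
\[\Sp\bigl(\Fun^\times(A^{\eff}(T), D)\bigr) \simeq \Fun^{\oplus}(A^{\eff}(T), \Sp(D)).\]
For the left-hand side: a spectrum object in $\Fun^\times(A^{\eff}(T), D)$ is an $\Omega$-sequence of product-preserving functors, which is the same as a single product-preserving functor into $\Sp(D)$, because $\Sp$ commutes with the limit condition of product-preservation. Finally, since $A^{\eff}(T)$ is semiadditive (example \ref{exm:Burnside-semiadditive}) and $\Sp(D)$ is stable, hence semiadditive, a functor between them preserves finite products if and only if it is semiadditive, so $\Fun^\times(A^{\eff}(T), \Sp(D)) = \Fun^\oplus(A^{\eff}(T), \Sp(D))$.

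The main bookkeeping obstacle is in the first step: one must verify that passage from a $T$-$\infty$-category to its underlying $\infty$-category of cocartesian sections commutes with both $\underline{\Sp}_T(-)$ and $\underline{\CMon}_T(-)$. For $\underline{\CMon}_T$ this is essentially the definition, while for $\underline{\Sp}_T$ it follows by inspecting the defining pairing $\mathrm{Mor}_{/T^\op}(K, \underline{\Sp}_T(E)) \subseteq \mathrm{Mor}_{/T^\op}(K \times \Top^{fin}_*, E)$ and specializing to $K = T^\op$ mapped in by the identity (cocartesian sections are selected by the fiberwise linearity condition at each vertex). Once this compatibility is in place, the remaining steps are purely formal manipulations of universal properties.
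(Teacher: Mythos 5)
Your proof is correct and is essentially the paper's own argument: the paper disposes of this corollary in one line by observing that both sides are the global sections of $\underline{\Sp}^T(\underline{D}_T)$, which is exactly the chain you spell out — theorem \ref{thm:universal-property-B-spectra} on the left-hand side, and the unnumbered remark $\Fun^\times(A^{\eff}(T),D)\cong\CMon_T(\underline{D}_T)$ together with commuting $\Sp$ past the product-preservation condition on the right. Your additional verifications (that passing to cocartesian sections commutes with $\underline{\Sp}_T(-)$ and $\underline{\CMon}_T(-)$, and that $\Fun^\times=\Fun^\oplus$ when source and target are semiadditive) are precisely the steps left implicit in the paper's one-liner, and they are all sound.
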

\begin{proof}
    Both of those are equivalent to the global sections of $\underline{\Sp}^T(\underline{D}_T)$.
\end{proof}


\appendix

\section{Comparison with orthogonal spectra}\label{sec:orthogonal}
    \begin{nul}
    Let $G$ be a finite group. In this appendix we will prove that our notion of $G$-spectra coincides with the orthogonal $G$-spectra developed in \cite{MR1922205}, thus reproving a theorem by Guillou and May (\cite{guillou2}). Fix once and for all a complete $G$-universe $\mathcal{U}$ (that is an isometric $G$-action on $\mathbb{R}^\infty$ such that every finite-dimensional representation can be isometrically embedded in $\mathbb{R}^\infty$ countably many times) and note that its restriction to a subgroup $H$ of $G$ is an $H$-universe, which we will take with indexing set given by the $G$-invariant subspaces. The category of orthogonal $H$-spectra with respect to $U$ (\cite[Df.~II.2.6]{MR1922205}) will be denoted by $\mathrm{Sp}^H_{(1)}$.
    \end{nul}
    
    \begin{dfn}
        Let $(\OO_G)_*$ be the category of $G$-orbits with a distinguished basepoint but with any possible map.\footnote{Another way of thinking of this category is as the category of $G$-orbits together with an explicit isomorphism with an orbit of the form $G/H$. This is of course purely bookkeeping and has nothing to do with the use of basepoints when defining $G$-spectra.},  We will write an element of $(\OO_G)_*$ as $G/H$ where the distinguished basepoint is $eH$. It is clear that the functor $(\OO_G)_*\to \OO_G$ that forgets the basepoint is an equivalence of categories. A map $G/H\to G/K$ is the datum of $gK\in G/K$ such that $g^{-1}Hg\subseteq K$. We have a functor from $(\OO_G)_*^\op$ to categories sending
    \begin{itemize}
        \item A pointed orbit $G/H$ to the category $\mathrm{Sp}_{(1)}^H$ of orthogonal $H$-spectra with respect to $U$;
        \item A map $G/H\to G/K$ the composition of the functors
        \[\mathrm{Sp}^K_{(1)}\to \mathrm{Sp}_{(1)}^{g^{-1}Hg} \to \mathrm{Sp}_{(1)}^H\]
        where the first functor is the restriction along the inclusion $g^{-1}Hg\subseteq K$ and the second functor is induced by the isomorphism $g^{-1}Hg\cong H$ given by conjugating by $g^{-1}$.
    \end{itemize}
    If we equip every category $\mathrm{Sp}_{(1)}^H$ with the family of $\pi_*$-isomorphisms (\cite[Df.~III.3.2]{MR1922205}) this becomes a functor from $(\OO_G)_*^\op$ to the category of relative categories (since \cite[Lm.~V.2.2]{MR1922205} implies that change of groups preserve $\pi_*$-isomorphisms). By precomposing with the equivalence $\OO_G^\op\cong (\OO_G)_*^\op$ and postcomposing with the localization functor from relative categories to $\infty$-categories we finally obtain a functor
    \[\OO_G^\op\to\Cat_\infty\]
    that classifies a cocartesian fibration $\underline{\Sp}^G_{orth}\to \OO_G^\op$. We call this cocartesian fibration the $G$-$\infty$-category of orthogonal $G$-spectra. It comes equipped with a natural $G$-functor
    \[\Omega^\infty:\underline{\Sp}^G_{orth}\to \underline{\Top}_G\]
    induced by the natural transformation obtained by sending every orthogonal $H$-spectrum to its 0-th space.
    \end{dfn}
    
    \begin{lem}
        The $G$-$\infty$-category $\underline{\Sp}^G_{orth}$ is $G$-stable
    \end{lem}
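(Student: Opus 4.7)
The plan is to verify the two clauses of the definition of $G$-stability separately: first that each fiber $\mathrm{Sp}^H_{(1)}[\pi_*^{-1}]$ is a stable $\infty$-category, and second that the whole $G$-$\infty$-category is $G$-semiadditive. The second clause will, via proposition \ref{prp:B-colims}, amount to checking the existence and coincidence of left and right adjoints to the pushforward functors together with the Beck--Chevalley condition, which in concrete terms is the Wirthmüller isomorphism plus the double coset formula.

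For fiberwise stability I would appeal directly to the Mandell--May theory: by \cite[Th.~III.4.2]{MR1922205} the category $\mathrm{Sp}^H_{(1)}$ admits a stable model structure whose weak equivalences are the $\pi_*$-isomorphisms, so the associated $\infty$-category is stable in the sense of \cite{HA}. Since the pushforward functors in the definition of $\underline{\Sp}^G_{orth}$ are obtained from restriction along subgroup inclusions (composed with conjugation isomorphisms), and restriction is a left and right Quillen functor, each pushforward $\delta_e$ preserves all finite fiberwise limits and colimits.

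To establish $G$-semiadditivity I would verify the hypotheses of proposition \ref{prp:B-colims}. For each inclusion $H\subseteq K$ of subgroups the restriction functor $\mathrm{Res}_H^K\colon \mathrm{Sp}^K_{(1)}\to \mathrm{Sp}^H_{(1)}$ has both a left adjoint (induction $\mathrm{Ind}_H^K$, given by $K_+\wedge_H(-)$) and a right adjoint (coinduction $\mathrm{CoInd}_H^K=\mathrm{Map}_H(K_+,-)$), and both adjunctions are Quillen adjunctions on the stable model structure by \cite[Pr.~V.2.3]{MR1922205}; they therefore descend to $\infty$-categorical adjunctions between the underlying stable $\infty$-categories. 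The Beck--Chevalley condition is exactly the double-coset formula for induction-restriction of equivariant spectra, which is standard. To pass from semiadditivity of adjoints to genuine $G$-semiadditivity I would invoke the Wirthmüller isomorphism $\mathrm{Ind}_H^K\simeq \mathrm{CoInd}_H^K$ (valid since $[K:H]<\infty$), giving an equivalence between the left and right adjoint to restriction; together with Beck--Chevalley this upgrades the fiberwise biproduct structure on each $\mathrm{Sp}^H_{(1)}$ to the coincidence of the comparison map in equation \eqref{eqn:additive-category} for every $I=[U\to V]\in\underline{\FF}_G$, after reducing $U$ to a disjoint union of orbits $K/H$.

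The main obstacle is the Wirthmüller isomorphism: while it is classical on homotopy categories, I need it as a natural equivalence of $\infty$-categorical functors compatibly with composition of inclusions. I would extract this by choosing an explicit zig-zag of $\pi_*$-isomorphisms as in \cite[Ch.~V]{MR1922205} and checking that it is natural up to coherent homotopy via Quillen's adjunction machinery; alternatively, one can verify that the natural norm map $\mathrm{Ind}_H^K\to \mathrm{CoInd}_H^K$ defined on the point-set level is a $\pi_*$-isomorphism on every cofibrant orthogonal spectrum, which is a routine computation on homotopy groups using the explicit formulas for the fixed-point spectra of induction and coinduction.
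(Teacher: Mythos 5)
Your proposal is correct and follows essentially the same route as the paper's proof, which likewise obtains fiberwise stability by localizing the stable model structure of \cite{MR1922205} and observes that, after unwrapping the definitions via the adjoints and Beck--Chevalley conditions of proposition \ref{prp:B-colims}, $G$-semiadditivity is exactly the Wirthm\"uller isomorphism (\cite[Th.~II.6.2]{MR866482}, \cite[Th.~III.4.16]{MR1922205}). One remark: the coherence issue you flag as the main obstacle is not actually an obstacle, since the comparison map of equation \eqref{eqn:additive-category} is constructed internally to the $G$-$\infty$-category from the zero object and the characteristic maps, so you never need the Wirthm\"uller equivalence as a coherently natural transformation --- you only need the internally defined norm map to be an equivalence, and equivalences are detected in the homotopy category, where maps from an indexed coproduct to an indexed product are determined by their matrix components and the classical statement applies directly.
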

    \begin{proof}
        Since the fibers are obtained by localizing a stable model category at the weak equivalences $\Sp^G_{orth}$ is fiberwise stable. So we just need to check $G$-semiadditivity. But after unwrapping the definitions this is equivalent to the Wirthmüller isomorphism (\cite[Th.~II.6.2]{MR866482}, which holds for orthogonal $G$-spectra by \cite[Th.~III.4.16]{MR1922205}).
    \end{proof}
    
    We can now give a very simple proof of \cite[Th.~0.1]{guillou2} along the outline in section 11 of \cite{M2}.
    \begin{thm}[Guillou-May]
        The functor $\Omega^\infty:\Sp^G_{orth}\to \underline{\Top}_G$ lifts to an equivalence of $G$-$\infty$-categories $\underline{\Sp}^G_{orth}\cong \underline{\Sp}^G$.
    \end{thm}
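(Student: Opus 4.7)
The plan is to use Theorem~\ref{thm:universal-property-B-spectra} to produce the comparison $G$-functor and then verify it is an equivalence fiberwise. First I would check that $\Omega^\infty\colon\underline{\Sp}^G_{orth}\to\underline{\Top}_G$ is $G$-linear. Fiberwise linearity is the classical fact that the zeroth space functor for orthogonal $H$-spectra is left exact for every $H\le G$. For $G$-semiadditivity I would use that the source is $G$-semiadditive (by the preceding lemma), so it is enough to show that $\Omega^\infty$ preserves finite $G$-products; this holds because on each fiber $\Omega^\infty$ is a right adjoint and because it commutes with the coinduction functors $\prod_e$ on orthogonal spectra, which are themselves right adjoints. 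Applying the universal property with $C=\underline{\Sp}^G_{orth}$ and $D=\underline{\Top}_G$ then yields an essentially unique $G$-right-exact $G$-functor
\[\Phi\colon \underline{\Sp}^G_{orth}\longrightarrow \underline{\Sp}^G\]
together with an equivalence $\Omega^\infty\circ\Phi\simeq\Omega^\infty$. Since both source and target are $G$-stable, pushout squares coincide with pullback squares fiberwise and $G$-coproducts coincide with $G$-products, so $\Phi$ is automatically $G$-exact.

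To show $\Phi$ is an equivalence of $G$-$\infty$-categories it is enough to check it fiber by fiber. By naturality of the whole construction under the equivalence $(\OO_G)_{/(G/H)}\simeq \OO_H$, the fiber $\Phi_{G/H}$ identifies with the analogous comparison built from $H$ alone, so the problem reduces to showing that for every finite group $G$ the top-fiber map
\[\Phi_{G/G}\colon \mathrm{Sp}^G_{(1)}\longrightarrow (\underline{\Sp}^G)_{G/G}\]
is an equivalence of stable $\infty$-categories.

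I expect this last step to be the main obstacle. To treat it I would invoke the corollary following Theorem~\ref{thm:universal-property-B-spectra} to identify $(\underline{\Sp}^G)_{G/G}$ with the $\infty$-category $\Fun^{\oplus}(A^{\eff}(\OO_G),\mathrm{Sp})$ of spectral Mackey functors. Under $\Phi_{G/G}$ an orthogonal $G$-spectrum $E$ is sent to the Mackey functor assembled from the fixed-point $\Omega$-spectra $\{E^H\}_{H\le G}$ with their restriction and transfer maps; this is forced by the requirement that composition with the $\Omega^\infty$ of $\underline{\Sp}^G$ return the underlying system of $H$-spaces. Essential surjectivity on the compact generators $\Sigma^\infty_+ G/H$ is then manifest. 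Fully faithfulness between such generators reduces to the computation of the mapping spectra, which on the left side agree with the Burnside bimodule $A(H,K)$ by classical equivariant stable homotopy theory (see \cite{MR1922205, MR866482}) and on the right side agree with $A(H,K)$ by the very definition of $A^{\eff}(\OO_G)$ together with the nonequivariant comparison between orthogonal spectra and $\infty$-categorical spectra. Since both stable $\infty$-categories are compactly generated by the family $\{\Sigma^\infty_+ G/H\}_{H\le G}$, these two checks suffice to conclude that $\Phi_{G/G}$ is an equivalence, whence $\Phi$ itself is an equivalence of $G$-$\infty$-categories.
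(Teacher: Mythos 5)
Your overall architecture coincides with the paper's: produce the comparison functor from Theorem~\ref{thm:universal-property-B-spectra} after checking that $\Omega^\infty\colon\underline{\Sp}^G_{orth}\to\underline{\Top}_G$ is $G$-linear (using the preceding lemma that $\underline{\Sp}^G_{orth}$ is $G$-stable), reduce to fibers, and conclude by a Schwede--Shipley argument on the compact generators $\Sigma^\infty_+G/H$. Up to and including the reduction to the generators, your proposal is essentially the paper's proof (the paper checks each fiber $(\underline{\Sp}^G)_V$ directly rather than invoking the equivalence $(\OO_G)_{/(G/H)}\simeq\OO_H$, but this is cosmetic).

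The genuine gap is in your full-faithfulness step, which is exactly where the paper's two external citations do the real work. You claim the mapping spectra between generators agree on both sides with ``the Burnside bimodule $A(H,K)$,'' on the orthogonal side by classical equivariant stable homotopy theory and on the Mackey side ``by the very definition of $A^{\eff}(\OO_G)$.'' Neither claim is right as stated. First, $A(H,K)$ is only $\pi_0$ of the mapping spectrum; the mapping spectrum itself is the genuine fixed-point spectrum $\left(\Sigma^\infty_+(G/H\times G/K)\right)^G$, which by the tom Dieck splitting has abundant higher homotopy (stable homotopy of classifying spaces of Weyl groups), and matching $\pi_0$ alone cannot establish full faithfulness of a functor between stable $\infty$-categories. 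Second, nothing about the mapping spectra on the spectral-Mackey side is definitional: the definition of $\underline{\AA}^{\eff}(T)$ only provides the mapping \emph{spaces} of the Burnside $\infty$-category (certain $1$-truncated span groupoids), whereas the mapping spectrum between the free objects of $\Fun^\times(A^{\eff}(G),\Sp)$ is a group completion whose identification with $\Omega^\infty\left(\Sigma^\infty_+(G/H\times G/K)\right)^G$ is precisely the equivariant Barratt--Priddy--Quillen theorem \cite[Th.~10.6]{M2}; the matching computation on the orthogonal side is \cite[Th.~V.11.1]{MR1922205} (self-duality of orbits, resting on the Wirthm\"uller isomorphism). These two theorems are what your ``manifest''/``by definition'' must be replaced with. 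A smaller point at the same spot: you should also argue that $\Phi$ sends $\Sigma^\infty_+G/H$ to the free Mackey functor and induces the above identification of mapping spectra, rather than the two sides being merely abstractly equivalent; the compatibility $\Omega^\infty\circ\Phi\simeq\Omega^\infty$ you record is the right ingredient, but it does not by itself ``force'' this, since $\Omega^\infty$ neither detects equivalences nor obviously commutes with $\Sigma^\infty$ along a finite-$G$-limit-preserving functor.
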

    \begin{proof}
        Since the functor $\underline{\Sp}^G_{orth}\to \underline{\Top}_G$ preserves all finite $G$-limits (it has a left $G$-adjoint by proposition \cite[Pr.~7.3.2.1]{HA}) it lifts uniquely to a functor $\Xi:\Sp^G_{orth}\to \Sp^G$.
        
        For every orbit $V$ the fibers $(\Sp^G_{orth})_V$ and $(\Sp^G)_V$ are both generated by suspension spectra of orbits. Moreover $\Xi$ sends suspension spectra of orbits to suspension spectra of orbits and is fully faithful when restricted to those subcategories by \cite[Th.~10.6]{M2} and \cite[Th.~V.11.1]{MR1922205}, since in both settings $\Map(\Sigma^\infty_+G/H,\Sigma^\infty_+G/K)$ is just $\Omega^\infty\left(\Sigma^\infty_+(G/H\times G/K)\right)^G$. Hence it is an equivalence by the Schwede-Shipley theorem \cite[Th.~7.1.2.1]{HA}.
    \end{proof}

    From this description of $G$-spectra we immediately obtain a recognition principle for $G$-connective $G$-spectra
    \begin{cor}\label{cor:recognition-principle}
        There is an adjuction
        \[B\dashv\Omega^\infty\colon\CMon_G(\underline{\Top}_G)\leftrightarrows\Sp^G\]
        such that
        \begin{itemize}
            \item the unit $X\to\Omega^\infty B X$ is an equivalence if and only $X^H$ is a group-like monoid for every subgroup $H<G$;
            \item the counit $B\Omega^\infty E\to E$ is an equivalence if and only if $E^H$ is connective for every subgroup $H<G$.
        \end{itemize}
    \end{cor}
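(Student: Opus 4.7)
My plan is to construct $B$ as the fiber over $G/G$ of a $G$-left adjoint to the $G$-functor $\Omega^\infty\colon \underline{\Sp}^G \to \underline{\CMon}_G(\underline{\Top}_G)$, and then to check the unit and counit conditions by reducing to the classical (non-equivariant) recognition principle via the Mackey functor interpretation.

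First I will argue that the fiberwise left adjoint to $\Omega^\infty$ assembles into a $G$-functor. The $G$-$\infty$-category $\underline{\CMon}_G(\underline{\Top}_G)$ is $G$-semiadditive, so each pushforward $\delta_e$ between its fibers admits coinciding left and right adjoints and in particular preserves both limits and colimits. Under the fiberwise stabilization description $\underline{\Sp}^G = \underline{\Sp}_G(\underline{\CMon}_G(\underline{\Top}_G))$ from the definition of $T$-spectra, the pushforwards in $\underline{\Sp}^G$ are obtained from those in $\underline{\CMon}_G(\underline{\Top}_G)$ by applying $\Sp(-)$, and exactness in both directions ensures that the classical fiberwise $\Sigma^\infty$ commutes with pushforward. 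This promotes $\Sigma^\infty$ to a $G$-functor left $G$-adjoint to $\Omega^\infty$; restricting to the fiber over $G/G$ yields the adjunction $B \dashv \Omega^\infty$ of the statement.

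Next I will identify this adjunction pointwise using the Mackey functor interpretations $\CMon_G(\underline{\Top}_G) \simeq \Mack^G(\Top) = \Fun^\times(A^{\eff}(\OO_G),\Top)$ and $\Sp^G \simeq \Fun^\oplus(A^{\eff}(\OO_G),\Sp)$ from the previous section and the corollary following Theorem~\ref{thm:universal-property-B-spectra}. Because stabilization commutes with product-preserving functor categories, there is a natural equivalence $\Sp(\Fun^\times(A^{\eff}(\OO_G),\Top)) \simeq \Fun^\times(A^{\eff}(\OO_G),\Sp)$, and chasing through the construction shows that $\Omega^\infty$ corresponds under this identification to post-composition with the classical $\Omega^\infty\colon \Sp \to \CMon(\Top)$ (using the ambient semiadditivity of $A^{\eff}(\OO_G)$ to canonically promote a $\Top$-valued product-preserving functor to one taking values in $\CMon(\Top)$). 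Consequently $B$ is identified with post-composition with the classical functor $B^{\mathrm{cl}}\colon \CMon(\Top) \to \Sp$ (group completion followed by stabilization), which passes to the product-preserving subcategory because $B^{\mathrm{cl}}$ preserves finite products.

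With this pointwise description in hand, I will evaluate the unit and counit at each orbit $G/H$. The unit $X \to \Omega^\infty B X$ at $G/H$ becomes the classical unit $X^H \to \Omega^\infty B^{\mathrm{cl}}(X^H)$, which by the ordinary recognition principle is an equivalence precisely when the commutative monoid $X^H$ is group-like. Similarly the counit $B\Omega^\infty E \to E$ at $G/H$ is the classical counit $B^{\mathrm{cl}}\Omega^\infty(E^H) \to E^H$, an equivalence iff $E^H$ is connective. Since a morphism of Mackey functors is an equivalence iff it is an equivalence at every orbit, both claimed characterizations follow. I expect the main obstacle to be the rigorous identification of the $G$-adjunction with the post-composition adjunction under the Mackey interpretations; in particular, one has to carefully handle the passage between $\Top$-valued Mackey functors (whose values automatically lift to commutative monoids via the Mackey structure) and $\CMon(\Top)$-valued ones, so that the classical recognition adjunction may be applied pointwise and inherit the equivalence criteria.
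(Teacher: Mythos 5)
Your proposal is correct and takes essentially the same route as the paper's proof: identify $\CMon_G(\underline{\Top}_G)\simeq\Fun^\times(A^{\eff}(G),\Top)\simeq\Fun^\oplus(A^{\eff}(G),\CMon(\Top))$ and $\Sp^G\simeq\Fun^\oplus(A^{\eff}(G),\Sp)$, realize the adjunction as postcomposition with the classical $B\dashv\Omega^\infty$ (which restricts to the semiadditive functor subcategories since both functors preserve finite products), and deduce both criteria from the classical recognition theorem evaluated orbitwise, where the value at $G/H$ is $X^H$ resp.\ $E^H$. Your first paragraph constructing a $G$-left adjoint by hand is redundant scaffolding that the paper dispenses with, since the postcomposition description already supplies the adjunction.
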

    \begin{proof}
        After our identifications this is just the adjunction
        \[\Fun^\times(A^{\eff}(G),\Top)\cong \Fun^\oplus(A^{\eff}(G),\CMon(\Top)) \leftrightarrows \Fun^\oplus(A^{\eff}(G),\Sp)\]
        given by postcomposition with the adjunction for ordinary spectra, and the thesis follows from the classical recognition theorem.
    \end{proof}


\bibliographystyle{amsplain}
\bibliography{Gcats}

\end{document}